\newcommand{\indentalign}{\hspace{0.3in}&\hspace{-0.3in}}
\renewcommand{\Re}{\operatorname{Re}}
\renewcommand{\Im}{\operatorname{Im}}
\newcommand{\defeq}{\stackrel{\rm{def}}{=}}
\newcommand{\sgn}{\operatorname{sgn}}
\newcommand{\bear}{\begin{eqnarray}}
\newcommand{\eear}{\end{eqnarray}}
\newcommand{\beeq}{\begin{equation}}
\newcommand{\eneq}{\end{equation}}
\def\bm{\begin{bmatrix}}
\def\endm{\end{bmatrix}}
\def\de{\delta}
\let\phi\varphi
\newtheorem{theorem}{Theorem}
\newtheorem{proposition}[theorem]{Proposition}
\newtheorem{lemma}[theorem]{Lemma}
\newtheorem{corollary}[theorem]{Corollary}
\theoremstyle{remark}
\numberwithin{equation}{section}
\numberwithin{theorem}{section}
\numberwithin{table}{section}
\numberwithin{figure}{section}
\title[Blow-up for 1D NLS with point nonlinearity I]{Blow-up for the 1D nonlinear Schr\"odinger equation with point nonlinearity I: Basic theory}
\author{Justin Holmer}
\author{Chang Liu}
\address{Brown University}
\begin{document}

\maketitle

\begin{abstract}
We consider the 1D nonlinear Schr\"odinger equation (NLS) with focusing \emph{point nonlinearity},
\begin{equation}
\label{E:intro-1}
i\partial_t\psi + \partial_x^2\psi + \delta|\psi|^{p-1}\psi = 0,
\end{equation}
where $\de=\de(x)$ is the delta function supported at the origin.    In this work, we show that \eqref{E:intro-1} shares many properties in common with those previously established for the focusing autonomous translationally-invariant NLS 
\begin{equation}
\label{E:intro-2}
i\partial_t \psi + \Delta \psi + |\psi|^{p-1}\psi=0 \,.
\end{equation}
The critical Sobolev space $\dot H^{\sigma_c}$ for \eqref{E:intro-1} is $\sigma_c=\frac12-\frac{1}{p-1}$, whereas for \eqref{E:intro-2} it is $\sigma_c=\frac{d}{2}-\frac{2}{p-1}$.  In particular, the $L^2$ critical case for \eqref{E:intro-1} is $p=3$.  
We prove several results pertaining to blow-up for \eqref{E:intro-1} that correspond to key classical results for \eqref{E:intro-2}.  Specifically, we (1) obtain a sharp Gagliardo-Nirenberg inequality analogous to Weinstein \cite{MR691044}, (2) apply the sharp Gagliardo-Nirenberg inequality and a local virial identity to obtain a sharp global existence/blow-up threshold analogous to Weinstein \cite{MR691044}, Glassey \cite{MR0460850} in the case $\sigma_c=0$ and Duyckaerts, Holmer, \& Roudenko \cite{MR2470397}, Guevara \cite{MR3266698}, and Fang, Xie, \& Cazenave, \cite{MR2838120} for $0<\sigma_c<1$,  (3) prove a sharp mass concentration result in the $L^2$ critical case analogous to Tsutsumi \cite{MR1074950}, Merle \& Tsutsumi \cite{MR1047566} and (4) show that minimal mass blow-up solutions in the $L^2$ critical case are pseudoconformal transformations of the ground state, analogous to Merle \cite{merle1993determination}.

\end{abstract}

\section{Introduction}

We consider the 1D nonlinear Schr\"odinger equation (NLS) with $p$-power focusing \emph{point nonlinearity}, for $p>1$,
\begin{equation}
\label{E:1-120}
i\partial_t \psi + \partial_x^2 \psi + \delta |\psi|^{p-1}\psi =0
\end{equation}
where $\de=\de(x)$ is the delta function supported at the origin, and $\psi(x,t)$ is a complex-valued wave function for $x\in \mathbb{R}$.  The equation \eqref{E:1-120} can be interpreted as the free linear Schr\"odinger equation
$$i\partial_t \psi + \partial_x^2 \psi =0 \,, \quad \text{for} \quad x\neq 0$$
together with the jump conditions at $x=0$:\footnote{We define $\psi(0-,t) = \lim_{x\nearrow 0} \psi(x,t)$ and $\psi(0+,t) = \lim_{x\searrow 0} \psi(x,t)$.}  
\begin{equation}
\label{E:jump}
\begin{aligned}
&\psi(0,t) \defeq \psi(0-,t) = \psi(0+,t) \\
&\partial_x \psi(0+,t) - \partial_x\psi (0-,t) = - |\psi(0,t)|^{p-1}\psi(0,t)
\end{aligned}
\end{equation}
Appropriate function spaces in which to discuss solutions, and the corresponding meaning of \eqref{E:jump} will be given below.  Specifically important is the case $p=3$, the 1D focusing NLS with \emph{cubic} point nonlinearity
\begin{equation}
\label{E:1dcc}
i\partial_t\psi + \partial_x^2\psi + \delta |\psi|^2\psi =0
\end{equation}

This equation could model the following two physical settings, as proposed by \cite{MolinaBustamante}.  First, \eqref{E:1dcc} could model an electron propagating in a 1D linear medium which contains a vibrational ``impurity'' at the origin that can couple strongly to the electron.  In the approximation where one considers the vibrations completely ``enslaved'' to the electron,  \eqref{E:1dcc} is obtained as the effective equation for the electron.  \cite{MolinaBustamante} further remark that an important application of \eqref{E:1dcc} is that of a wave propagating in a 1D linear medium which contains a narrow strip of nonlinear (general Kerr-type) material. This nonlinear strip is assumed to be much smaller than the typical wavelength.  In fact, periodic and quasiperiodic arrays of nonlinear strips have been considered by a straightforward generalization of \eqref{E:1dcc} in order to model wave propagation in some nonlinear superlattices \cite{Hennig}.

We will begin our mathematical discussion of \eqref{E:1-120} with a comparison to the $d$D ($x\in \mathbb{R}^d$) pure-power autonomous translationally invariant focusing NLS
\begin{equation}
\label{E:1-121}
i\partial_t \psi + \Delta \psi + |\psi|^{p-1}\psi=0
\end{equation}
The equation \eqref{E:1-121} is a well-studied example of a scalar nonlinear dispersive wave equation -- see the textbook \cite{MR2233925} for an overview of this field.  The equation \eqref{E:1-121} obeys the scaling law
$$\psi(x,t) \text{ solves }\eqref{E:1-121} \implies \lambda^{2/(p-1)}\psi(\lambda x,\lambda^2t) \text{ solves }\eqref{E:1-121}$$
The scale-invariant Sobolev norm $\dot H^{\sigma_c}$ (satisfying $\|\psi\|_{\dot H^{\sigma_c}} = \| \psi_\lambda \|_{\dot H^{\sigma_c}}$) is
$$\sigma_c = \frac{d}{2} - \frac{2}{p-1} \qquad \text{(for \eqref{E:1-121})}$$
A particularly important case is $\sigma_c=0$, the $L^2$-critical, or mass-critical, case $p= 1+ \frac{4}{d}$.  Note that for $d=1$, the $L^2$ critical nonlinearity is $p=5$ (quintic)
and for $d=2$, the $L^2$ critical case is the cubic nonlinearity $p=3$.  On the other hand, the equation \eqref{E:1-120} obeys the scaling law
\begin{equation}
\label{E:1-144}
\psi(x,t) \text{ solves }\eqref{E:1-120} \implies \lambda^{1/(p-1)}\psi(\lambda x,\lambda^2t) \text{ solves }\eqref{E:1-120}
\end{equation}
The scale-invariant Sobolev norm $\dot H^{\sigma_c}$ (satisfying $\|\psi\|_{\dot H^{\sigma_c}} = \| \psi_\lambda \|_{\dot H^{\sigma_c}}$) is
$$\sigma_c = \frac{1}{2} - \frac{1}{p-1} \qquad \text{(for \eqref{E:1-120})} $$
Thus, for \eqref{E:1-120}, the point  nonlinearity, $p=3$ (cubic) is the $L^2$ critical setting.  

The standard equation \eqref{E:1-121} satisfies mass, energy, and  momentum conservation laws (meaning quantities independent of $t$)
$$M(\psi) = \| \psi\|_{L^2}^2$$
$$E(\psi) = \frac12 \|\nabla \psi \|_{L^2}^2 - \frac{1}{p+1} \|\psi\|_{L^{p+1}}^{p+1}$$
$$P(\psi) = \Im \int \bar \psi \; \nabla \psi $$
For \eqref{E:1-120}, the conservation laws take the form
$$M(\psi) = \|\psi\|_{L^2}^2$$
$$E(\psi) = \frac12 \|\partial_x \psi\|_{L^2}^2 - \frac{1}{p+1} |\psi(0, \cdot)|^{p+1}$$
There is no conservation of momentum, since Noether's theorem links this to translational invariance, a property that \eqref{E:1-121} has but \eqref{E:1-120} does not.  In \S\ref{S:conserved}, we prove these conservation laws, and also state and prove and important identity, the local virial identity, which has applications to the blow-up results in Theorem \ref{T:global-blowup}(2), \ref{T:global-blowup-critical}(2) below.

In this paper, we will show that many classical results for \eqref{E:1-121} have counterparts for \eqref{E:1-120}.  In some cases, the results for \eqref{E:1-120} are stronger or at least simpler to prove.  

Local well-posedness for \eqref{E:1-121} in $H_x^s$ was obtained by Ginibre \& Velo \cite{MR0533218}, Kato \cite{MR0877998}, and Cazenave-Weissler \cite{MR1055532}.  For \eqref{E:1-120}, an $H_x^1$ (energy space) local well-posedness theory was given by Adami \& Teta \cite{MR1814425}.  Alternatively, one can use the `time traces' estimates in Holmer \cite{holmer2005initial} or apply the method of Bona, Sun, \& Zhang \cite{BonaSunZhang} as recently done by Batal \& \"Ozsari \cite{Batal}.  (See also Erdogan \& Tzirakis \cite{ET15} for recent work on the half-line problem).   Specifically, we have

\begin{theorem}[$H_x^1$ local well-posedness]
\label{T:local}
Given initial data $\psi_0 \in H_x^1$, there exists $T = T(\|\psi_0\|_{H^1})$ and a solution $\psi(x,t)$ to \eqref{E:1-120} on $[0,T)$ satisfying $\psi(0) = \psi_0$ and
\begin{align*}
& \psi \in C([0,T]; H_x^1) \\
& \psi \in L_{[0,T]}^qW_x^{1,p}, \text{ for Strichartz admissible pairs } \frac{2}{q} + \frac{1}{p} = \frac12 \\
& \psi \in C( x\in \mathbb{R}; H_{[0,T]}^{3/4}) \\
& \partial_x \psi \in C( x\in \mathbb{R}\backslash \{0\}; H_{[0,T]}^{1/4}) \\
& \partial_x \psi(0-,t) \defeq \lim_{x\nearrow 0} \partial_x \psi(x,t) \text{ and } \partial_x \psi(0+,t) \defeq \lim_{x\searrow 0}\partial_x\psi(x,t) \text{ exist in }H_{[0,T]}^{1/4}\\
& \partial_x \psi (0+,t) - \partial_x \psi(0-,t) = - |\psi(0,t)|^{p-1}\psi(0,t)  \text{ as an equality of }H_{[0,T]}^{1/4} \text{ functions}
\end{align*}
Among all solutions satisfying the above regularity conditions, it is unique.  Moreover, the data-to-solution map $\psi_0 \mapsto \psi$, as a map $H_x^1 \to C([0,T]; H_x^1)$, is continuous.
\end{theorem}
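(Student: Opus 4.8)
The plan is to run a contraction-mapping argument on the Duhamel formulation, treating the point nonlinearity as a boundary forcing term localized at $x=0$. First I would set up the Duhamel formula: writing $e^{it\partial_x^2}$ for the free 1D Schr\"odinger propagator and recalling that the delta potential term $\delta|\psi|^{p-1}\psi$ acts only through the value $\psi(0,t)$, the solution should satisfy
\begin{equation*}
\psi(t) = e^{it\partial_x^2}\psi_0 + i\int_0^t e^{i(t-s)\partial_x^2}\,\delta\,\big(|\psi(0,s)|^{p-1}\psi(0,s)\big)\,ds,
\end{equation*}
where the second term is the solution of the inhomogeneous problem with a point source of amplitude $g(s) := |\psi(0,s)|^{p-1}\psi(0,s)$. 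The key point is that this point-source Duhamel term can be written explicitly via the kernel of $e^{it\partial_x^2}$ evaluated at $x$, convolved in time against $g$; concretely it equals (up to constants) $\int_0^t (t-s)^{-1/2} e^{ix^2/4(t-s)} g(s)\,ds$. The analysis then splits into (i) boundary/trace estimates that control $g$ in $H_{[0,T]}^{1/4}$ and the trace $\psi(0,\cdot)$ in $H_{[0,T]}^{3/4}$, and (ii) the ``time-trace'' smoothing estimates for the point-source operator, which are precisely the half-line/initial-boundary-value estimates of Holmer \cite{holmer2005initial} (or equivalently the Bona--Sun--Zhang framework as in \cite{Batal}).

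The main steps, in order, are as follows. (1) Record the linear estimates: Strichartz estimates for $e^{it\partial_x^2}\psi_0$ on admissible pairs $\frac2q+\frac1p=\frac12$; the Kato-type local smoothing / trace estimate giving $e^{it\partial_x^2}\psi_0 \in C(x\in\mathbb{R}; H_{[0,T]}^{3/4})$ and $\partial_x e^{it\partial_x^2}\psi_0 \in C(x\in\mathbb{R}; H_{[0,T]}^{1/4})$ when $\psi_0\in H^1_x$; and the mapping properties of the point-source Duhamel operator $g\mapsto \int_0^t e^{i(t-s)\partial_x^2}\delta\, g(s)\,ds$, which should take $H_{[0,T]}^{1/4}\to C([0,T];H^1_x)\cap L^q_{[0,T]}W^{1,p}_x \cap C(x\in\mathbb{R}; H_{[0,T]}^{3/4})$ and send its $x$-derivative (away from $0$) into $C(x\in\mathbb{R}\backslash\{0\};H_{[0,T]}^{1/4})$, with the prescribed jump of size $-g$ across $x=0$ built into the kernel. (2) Control the nonlinearity: show that on the space
\begin{equation*}
X_T = \{\psi : \psi\in C([0,T];H^1_x),\ \psi\in L^q_{[0,T]}W^{1,p}_x,\ \psi(0,\cdot)\in H_{[0,T]}^{3/4}\}
\end{equation*}
with its natural norm, the map $\psi\mapsto g=|\psi(0,\cdot)|^{p-1}\psi(0,\cdot)$ is locally Lipschitz from $H_{[0,T]}^{3/4}\to H_{[0,T]}^{1/4}$ (using that $H^{3/4}$ of an interval is an algebra and a fractional chain-rule / Moser estimate, with an extra $T^\theta$ factor from the finite time interval to gain smallness). (3) Combine (1) and (2) to see that the Duhamel map is a contraction on a ball in $X_T$ for $T=T(\|\psi_0\|_{H^1})$ small, yielding existence and uniqueness within $X_T$; then bootstrap the fixed point to recover the remaining regularity assertions ($\partial_x\psi(0\pm,\cdot)\in H^{1/4}$, the jump relation as an $H^{1/4}$ identity, and $\psi\in C([0,T];H^1_x)$ via the $L^q W^{1,p}$ control and the equation). (4) Continuous dependence follows by the same contraction estimates applied to differences of two solutions.

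The main obstacle I expect is step (2) together with the sharp trace estimate in step (1): one needs the gain of $3/4$ derivative in time for the trace of a solution with $H^1_x$ data (the Kato smoothing effect giving $\psi(0,\cdot)\in H^{3/4}_t$ locally), and one needs the point-source Duhamel operator to map $H^{1/4}_t$ back into a space that controls the $H^{3/4}_t$ trace — i.e., the closing of the loop $H^{3/4}_t \to (\text{nonlinearity}) \to H^{1/4}_t \to (\text{point-source Duhamel}) \to H^{3/4}_t$ with a contraction constant less than $1$. This is exactly where the half-line smoothing machinery of \cite{holmer2005initial} enters, and the delicate point is matching the regularity indices $\frac34$ and $\frac14$ (which differ by $\frac12$, the smoothing of the 1D Schr\"odinger group at a point) so that the fractional-calculus estimate on $g=|u|^{p-1}u$ — which costs $\frac34$ derivatives in $t$ — is consistent with the $\frac14$-level output. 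The finite-time factor $T^\theta$ with $\theta>0$ is what ultimately produces the contraction for $T$ small depending only on $\|\psi_0\|_{H^1}$; verifying that $\theta>0$ for all $p>1$ (equivalently, that the relevant Strichartz and trace exponents leave room) is the quantitative heart of the argument.
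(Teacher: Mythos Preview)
The paper does not actually prove Theorem~\ref{T:local}: it explicitly states that ``the proofs of Theorem~\ref{T:local} and Prop.~\ref{P:lower-bound} will not be included in this paper,'' and instead defers to Adami--Teta \cite{MR1814425}, the time-trace estimates of Holmer \cite{holmer2005initial}, and the Bona--Sun--Zhang framework as adapted by Batal--\"Ozsari \cite{Batal} for the half-line problem. Your proposal is precisely the approach those references take --- Duhamel with a point-source boundary forcing, closed via the $H_t^{3/4}$ trace and $H_t^{1/4}$ smoothing indices --- so it is consistent with what the paper cites and there is nothing to compare against in the paper itself.
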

It is important that the limits $\partial_x \psi(0-,t) \defeq \lim_{x\nearrow 0} \partial_x \psi(x,t)$  and $\partial_x \psi(0+,t) \defeq \lim_{x\searrow 0}\partial_x\psi(x,t)$ do not exist pointwise in $t$, but in the Sobolev space $H_t^{1/4}$.  One particular consequence of the local theory is that if the maximal forward time $T_*>0$ of existence is finite (i.e. $T_*<\infty$) then necessarily
$$\lim_{t\nearrow T_*} \| \psi_x(t) \|_{L^2} = +\infty$$
In this case, we say that the solution $\psi(t)$ \emph{blows-up} at time $t=T_*>0$.  In fact, the rate of divergence of this norm has a lower bound:

\begin{proposition}[lower bound on the blow-up rate]
\label{P:lower-bound}
Suppose that $\psi(t)$ is an $H_x^1$ solution on $[0,T_*)$ that blows-up at time $t=T_*<\infty$.  Then there exists an absolute constant\footnote{$C$ is independent of the initial condition, and only depends on $p$} $C>0$ such that if $\| \psi_x(t) \|_{L_x^2} \geq |E(\psi)|^{1/2}$, then
$$\| \psi_x(t) \|_{L^2} \geq C(T_*-t)^{-(1-\sigma_c)/2}$$
\end{proposition}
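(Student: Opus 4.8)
The plan is to run the standard blow-up rate argument, adapted to the point nonlinearity, based on rescaling the solution near the (hypothetical finite) blow-up time and invoking the local well-posedness time $T = T(\|\psi_0\|_{H^1})$ from Theorem \ref{T:local}. Fix $t_0 \in [0,T_*)$ with $\lambda(t_0) \defeq \|\psi_x(t_0)\|_{L^2} \geq |E(\psi)|^{1/2}$, and define the rescaled datum $\phi_0(x) = \lambda(t_0)^{-1/(p-1)} \psi(\lambda(t_0)^{-1} x, t_0)$, which is exactly the profile appearing in the scaling law \eqref{E:1-144} (run backwards in $x$). A direct computation using $\sigma_c = \frac12 - \frac{1}{p-1}$ shows $\|(\phi_0)_x\|_{L^2} = \lambda(t_0)^{\sigma_c - 1}\|\psi_x(t_0)\|_{L^2}$ — wait, the homogeneity works out so that I should instead normalize to make $\|(\phi_0)_x\|_{L^2}$ of unit size: with the scaling \eqref{E:1-144}, one has $\|(\psi_\lambda)_x\|_{L^2} = \lambda^{1-\sigma_c}\|\psi_x\|_{L^2}$, so choosing $\lambda = \lambda(t_0)^{-1/(1-\sigma_c)}$ gives $\|(\phi_0)_x\|_{L^2} = 1$. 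The hypothesis $\|\psi_x(t_0)\|_{L^2} \geq |E(\psi)|^{1/2}$ together with the scaling of the energy, $E(\psi_\lambda) = \lambda^{2(1-\sigma_c)} E(\psi)$ (from the conservation laws in \S\ref{S:conserved}, noting that both $\|\partial_x\psi\|_{L^2}^2$ and $|\psi(0)|^{p+1}$ scale the same way), ensures $|E(\phi_0)| \leq \|(\phi_0)_x\|_{L^2}^2 = 1$, so the rescaled datum has $H^1$-norm controlled by an absolute constant (using also conservation of mass and that we only need the $\dot H^1$ part to be order one — but I will need $\|\phi_0\|_{L^2}$ bounded too, which requires $\sigma_c \geq 0$, i.e. $p \geq 3$; for $p < 3$ one instead bounds the relevant scale-invariant norm, or simply the $H^1$ norm is bounded because mass is conserved and $\lambda(t_0) \to \infty$ makes the rescaling shrink the $L^2$ norm).

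The key step is then: by Theorem \ref{T:local} applied to $\phi_0$, the rescaled solution exists on a time interval $[0,\tau]$ with $\tau = \tau(\|\phi_0\|_{H^1}) \geq \tau_0 > 0$ for an absolute constant $\tau_0$ depending only on $p$ (since $\|\phi_0\|_{H^1}$ is bounded by an absolute constant). Undoing the scaling in $t$ (the time variable scales by $\lambda^2 = \lambda(t_0)^{-2/(1-\sigma_c)}$), this says the original solution $\psi$ exists on $[t_0, t_0 + \tau_0 \lambda(t_0)^{-2/(1-\sigma_c)}]$. Since $\psi$ ceases to exist at $T_*$, we must have $t_0 + \tau_0 \lambda(t_0)^{-2/(1-\sigma_c)} \leq T_*$ — more precisely, by the blow-up criterion $\lim_{t\nearrow T_*}\|\psi_x(t)\|_{L^2} = \infty$, continuing the argument from any point in $[t_0, T_*)$ where the norm is large yields that the solution cannot exist past $T_*$, forcing $\tau_0 \lambda(t_0)^{-2/(1-\sigma_c)} \leq T_* - t_0$. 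Rearranging gives $\lambda(t_0) \geq \tau_0^{(1-\sigma_c)/2}(T_* - t_0)^{-(1-\sigma_c)/2}$, which is the claimed bound with $C = \tau_0^{(1-\sigma_c)/2}$.

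The main obstacle is making the local existence time genuinely uniform — i.e. ensuring that $T(\|\psi_0\|_{H^1})$ in Theorem \ref{T:local} can be taken to depend on $\|\psi_0\|_{H^1}$ only through an explicit decreasing function (so that a bound on $\|\phi_0\|_{H^1}$ translates to a lower bound on the existence time). This is standard in Strichartz-based fixed-point proofs of local well-posedness: the contraction is set up on a ball in $C([0,T];H^1_x) \cap L^q_{[0,T]}W^{1,p}_x$ whose radius is comparable to $\|\psi_0\|_{H^1}$, and $T$ is chosen so that the nonlinear term is a small perturbation, yielding $T \gtrsim \|\psi_0\|_{H^1}^{-\theta}$ for an explicit power $\theta = \theta(p) > 0$. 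One must check that the function-space setup for the point nonlinearity (the time-trace spaces $H^{3/4}_{[0,T]}$, $H^{1/4}_{[0,T]}$ appearing in Theorem \ref{T:local}) retains this scaling-compatible structure; this follows because those norms also obey clean homogeneity under \eqref{E:1-144}. A secondary technical point is handling the two regimes $p \geq 3$ and $1 < p < 3$ (equivalently $\sigma_c \geq 0$ and $\sigma_c < 0$) when bounding $\|\phi_0\|_{H^1}$ — in the subcritical regime the low-frequency part is controlled by the conserved mass after rescaling, while in the critical/supercritical regime one uses that $\sigma_c \in [0,1)$ to interpolate — but in all cases the conclusion $\|\phi_0\|_{H^1} \leq C(p)$ holds, which is all that is needed.
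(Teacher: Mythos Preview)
The paper does not actually include a proof of Proposition~\ref{P:lower-bound}; it only states that the result ``follows from the local theory estimates used to prove Theorem~\ref{T:local} and energy conservation,'' pointing to Cazenave--Weissler for the analogous argument for \eqref{E:1-121}. Your rescaling approach is exactly that Cazenave--Weissler argument, so you are following the route the paper indicates.

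That said, there is a genuine gap in your treatment of the rescaled $L^2$ norm. With $\mu = \|\psi_x(t_0)\|_{L^2}^{-1/(1-\sigma_c)}$ and $\phi_0(x)=\mu^{1/(p-1)}\psi(\mu x,t_0)$, one has
\[
\|\phi_0\|_{L^2} \;=\; \mu^{-\sigma_c}\,\|\psi(t_0)\|_{L^2} \;=\; \|\psi_x(t_0)\|_{L^2}^{\sigma_c/(1-\sigma_c)}\,M(\psi)^{1/2}\,.
\]
Since $\|\psi_x(t_0)\|_{L^2}\to\infty$ as $t_0\nearrow T_*$, this quantity stays bounded precisely when $\sigma_c\le 0$ (i.e.\ $p\le 3$) and \emph{diverges} when $\sigma_c>0$ (i.e.\ $p>3$); you have the two regimes reversed, and the assertion ``in all cases the conclusion $\|\phi_0\|_{H^1}\le C(p)$ holds'' is false in the $L^2$-supercritical range. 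Consequently you cannot invoke Theorem~\ref{T:local} as a black box, because $T(\|\phi_0\|_{H^1})$ would degenerate.

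The repair --- and this is what the paper's phrase ``local theory estimates'' is signaling --- is that the fixed-point argument behind Theorem~\ref{T:local} actually yields a lifespan lower bound depending only on $\|(\phi_0)_x\|_{L^2}$ and $|\phi_0(0)|$, not on $\|\phi_0\|_{L^2}$. This is natural for the point nonlinearity: the nonlinear forcing enters solely through the jump condition at $x=0$, so the contraction estimate in the time-trace spaces $C_xH^{3/4}_t$, $C_xH^{1/4}_t$ is driven by the size of $\psi(0,\cdot)$, which after rescaling is controlled via $|\phi_0(0)|^{p+1}=(p+1)\bigl(\tfrac12\|(\phi_0)_x\|_{L^2}^2 - E(\phi_0)\bigr)\le \tfrac32(p+1)$ by the energy scaling and the hypothesis $\|\psi_x(t_0)\|_{L^2}\ge |E(\psi)|^{1/2}$. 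Once this refined lifespan bound is extracted from the proof (rather than the statement) of Theorem~\ref{T:local}, the rest of your argument goes through verbatim.
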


Note that by definition of blow-up, $\lim_{t\nearrow T_*} \|\psi_x(t)\|_{L^2} = \infty$, and hence there is some time interval $(T_*-\delta,T_*)$ on which the condition $\| \psi_x(t) \|_{L_x^2} \geq |E(\psi)|^{1/2}$ applies.  Theorem \ref{T:local} is closely related to the half-line result in \cite{Batal}, and the proof given there can be adapted to prove Theorem \ref{T:local}.  Prop. \ref{P:lower-bound} follows from the local theory estimates used to prove Theorem \ref{T:local} and energy conservation.  The result for \eqref{E:1-121}, analogous to Prop. \ref{P:lower-bound}, is given by Cazenave \& Weissler \cite{MR1055532}.  The proofs of Theorem \ref{T:local} and Prop. \ref{P:lower-bound} will not be included in this paper.

Both \eqref{E:1-120} and \eqref{E:1-121} have solitary wave solutions $\psi(x,t) = e^{it} \varphi_0(x)$, where $\varphi_0$ solves the stationary equation
\begin{align}
\label{E:1-123} & \text{for } \eqref{E:1-120}, && 0=\varphi_0 - \partial_x^2 \varphi_0 - \delta |\varphi_0|^{p-1}\varphi_0 \\
\label{E:1-124} & \text{for } \eqref{E:1-121}, && 0= \varphi_0 - \Delta \varphi_0 - |\varphi_0|^{p-1} \varphi_0 
\end{align}
In the case of \eqref{E:1-124}, there exist a unique (up to translation) radial, smooth, exponentially decaying solution called the \emph{ground state}.  Existence via concentration compactness was obtained by Berestycki \& Lions \cite{MR695535}, and uniqueness was proved by Kwong \cite{MR969899}.  Weinstein \cite{MR691044} proved that this ground state $\varphi_0$ saturates the Gagliardo-Nirenberg inequality
\begin{equation}
\label{E:sharpGN}
\| \psi \|_{L^{p+1}} \leq c_{GN} \| \psi \|_{L^2}^{1-\sigma_1} \| \nabla \psi \|_{L^2}^{\sigma_1}
\end{equation}
where
$$\sigma_1 = \frac{d}{2} \frac{p-1}{p+1} \text{ equivalently } \frac{1}{p+1} = \frac12 - \frac{\sigma_1}{d}$$
and
$$c_{GN} = \frac{ \| \varphi_0 \|_{L^{p+1}}}{ \| \varphi_0 \|_{L^2}^{1-\sigma_1} \| \nabla \varphi_0 \|_{L^2}^{\sigma_1}}$$
The Pohozhaev identities are
$$
 \sigma_1 \| \varphi_0 \|_{L^2}^2 = (1-\sigma_1) \| \nabla \varphi_0 \|_{L^2}^2 = (1-\sigma_1)\sigma_1 \| \varphi_0 \|_{L^{p+1}}^{p+1}
$$
The corresponding statements for \eqref{E:1-123} are more straightforward.  For $x\neq 0$, the equation is $0 = \varphi_0 - \partial_x^2 \varphi_0$, which is linear with solution space spanned by $e^{\pm x}$.  Thus any $L^2$ solution to \eqref{E:1-123} must be of the form 
$$\varphi_0(x) = \begin{cases} \alpha e^{-x} & \text{for } x>0 \\ \beta e^x & \text{for }x<0 \end{cases}$$
Continuity across $x=0$ forces $\alpha=\beta$ and the jump in derivative condition forces $\alpha = 2^{1/(p-1)}$.  Consequently
\begin{equation}
\label{E:1-125}
\varphi_0(x) = 2^{1/(p-1)} e^{-|x|}
\end{equation}
is the only solution to \eqref{E:1-123}. The Pohozhaev identities take the form
$$\| \varphi_0 \|_{L^2}^2 = \|\partial_x \varphi_0 \|_{L^2}^2 = \frac12 \|\varphi_0\|_{L^\infty}^{p+1} = 2^{\frac{2}{p-1}}$$
and can be verified by direct computation from \eqref{E:1-125}.  Moreover, the analogue of Weinstein \cite{MR691044} is the following.

\begin{proposition}[sharp Gagliardo-Nirenberg inequality]
\label{P:sharpGN}
For any $\psi \in H^1$, 
\begin{equation}
\label{E:1-126}
|\psi(0)|^2 \leq \| \psi \|_{L^2} \|\psi' \|_{L^2} \,.
\end{equation} Equality is achieved if and only if there exists $\theta\in \mathbb{R}$, $\alpha>0$, and $\beta>0$  such that  $\psi(x)=e^{i\theta} \alpha \phi_0(\beta x)$.
\end{proposition}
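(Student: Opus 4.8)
The plan is to establish \eqref{E:1-126} by applying the fundamental theorem of calculus to $|\psi|^2$ on each of the half-lines $(0,\infty)$ and $(-\infty,0)$, then invoking Cauchy--Schwarz, and to obtain the equality statement by tracking exactly when each of these elementary steps is saturated.

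First I would dispose of the preliminaries. The one-dimensional Sobolev embedding $H^1(\mathbb{R})\hookrightarrow C(\mathbb{R})$ makes the value $\psi(0)$ meaningful and shows that $|\psi(x)|\to 0$ as $|x|\to\infty$; moreover $|\psi|^2\in W^{1,1}(\mathbb{R})$, since $|\psi|^2\in L^1$ and its distributional derivative $2\,\Re(\bar\psi\,\psi')$ lies in $L^1$ by Cauchy--Schwarz, so $x\mapsto|\psi(x)|^2$ is absolutely continuous and vanishes at $\pm\infty$. Integrating from $0$ to $+\infty$ and from $-\infty$ to $0$ therefore gives
\[
|\psi(0)|^2 = -2\int_0^\infty \Re(\bar\psi\,\psi')\,dx
\qquad\text{and}\qquad
|\psi(0)|^2 = 2\int_{-\infty}^0 \Re(\bar\psi\,\psi')\,dx .
\]
Averaging these two identities, using $|\Re(\bar\psi\,\psi')|\le|\psi|\,|\psi'|$ pointwise, and then applying Cauchy--Schwarz yields
\begin{align*}
|\psi(0)|^2
&= \int_{-\infty}^0 \Re(\bar\psi\,\psi')\,dx - \int_0^\infty \Re(\bar\psi\,\psi')\,dx \\
&\le \int_{\mathbb{R}} |\psi|\,|\psi'|\,dx
\le \|\psi\|_{L^2}\,\|\psi'\|_{L^2},
\end{align*}
which is \eqref{E:1-126}.

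For the equality case I would argue as follows, setting aside the trivial function $\psi\equiv 0$ (not of the asserted form, but for which both sides vanish). Suppose equality holds throughout the chain above. Then Cauchy--Schwarz equality forces $|\psi'|=c\,|\psi|$ a.e.\ for some constant $c\ge 0$; if $c=0$ then $\psi$ would be a constant in $L^2(\mathbb{R})$, hence $\psi\equiv0$, so in fact $c>0$. Equality in the two half-line estimates forces $\Re(\bar\psi\,\psi')=-|\psi|\,|\psi'|$ a.e.\ on $(0,\infty)$ and $\Re(\bar\psi\,\psi')=+|\psi|\,|\psi'|$ a.e.\ on $(-\infty,0)$; since $\Re z=\pm|z|$ holds only when $z$ is a real number of the matching sign, and $|\psi|\,|\psi'|=c|\psi|^2$, this gives $\bar\psi\,\psi'=-c|\psi|^2$ a.e.\ on $(0,\infty)$ and $\bar\psi\,\psi'=+c|\psi|^2$ a.e.\ on $(-\infty,0)$. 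Dividing by $\bar\psi$ where $\psi\ne0$, and noting that $\psi'=0$ a.e.\ on $\{\psi=0\}$ (again because $|\psi'|=c|\psi|$), we obtain $\psi'=-c\,\psi$ a.e.\ on $(0,\infty)$ and $\psi'=+c\,\psi$ a.e.\ on $(-\infty,0)$. Hence $x\mapsto\psi(x)e^{cx}$ is absolutely continuous with a.e.\ vanishing derivative on $(0,\infty)$, so it is constant there, and likewise $x\mapsto\psi(x)e^{-cx}$ is constant on $(-\infty,0)$; continuity at $x=0$ then forces $\psi(x)=\psi(0)e^{-c|x|}$, with $\psi(0)\ne0$ since otherwise $\psi\equiv0$. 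Writing $\psi(0)=e^{i\theta}|\psi(0)|$ and recalling $\phi_0(x)=2^{1/(p-1)}e^{-|x|}$ from \eqref{E:1-125}, this is precisely $\psi(x)=e^{i\theta}\alpha\,\phi_0(\beta x)$ with $\beta=c>0$ and $\alpha=2^{-1/(p-1)}|\psi(0)|>0$. Conversely, for any such $\psi$ a direct computation using $\|\phi_0\|_{L^2}^2=\|\phi_0'\|_{L^2}^2=2^{2/(p-1)}$ gives $|\psi(0)|^2=\|\psi\|_{L^2}\|\psi'\|_{L^2}=\alpha^2\,2^{2/(p-1)}$, so equality holds.

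I expect the inequality and the converse computation to be entirely routine; the one point requiring care is the equality analysis, specifically the passage from the a.e.\ identity $\bar\psi\,\psi'=\mp c|\psi|^2$ (which conveys nothing on $\{\psi=0\}$) to $\psi'=\mp c\,\psi$ a.e.\ on each half-line, and then the rigorous integration of this first-order relation within the class of absolutely continuous functions to conclude that $\psi$ is exactly the claimed exponential profile.
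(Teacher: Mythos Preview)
Your proof of the inequality \eqref{E:1-126} is essentially identical to the paper's: both use the fundamental theorem of calculus on each half-line to write $|\psi(0)|^2 = -\Re\int (\sgn x)\bar\psi\,\psi'\,dx$ and then apply Cauchy--Schwarz. The verification that $e^{i\theta}\alpha\varphi_0(\beta x)$ achieves equality is likewise done by direct computation in both.

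Where you genuinely diverge is in the characterization of equality. The paper argues variationally: if $\psi$ saturates \eqref{E:1-126} then it minimizes the functional $I(u)=\|u\|_{L^2}^2\|u'\|_{L^2}^2/|u(0)|^4$, hence satisfies the Euler--Lagrange equation $0=\psi-\frac{\|\psi\|_{L^2}^2}{\|\psi'\|_{L^2}^2}\psi''-\frac{2\|\psi\|_{L^2}^2}{|\psi(0)|^4}\delta|\psi|^2\psi$, and a rescaling reduces this to the ground-state equation \eqref{E:1-123}, whose only solution is $\varphi_0$. You instead track the equality conditions directly through the chain of elementary inequalities: Cauchy--Schwarz forces $|\psi'|=c|\psi|$, the pointwise step $|\Re(\bar\psi\psi')|\le|\psi||\psi'|$ forces $\bar\psi\psi'$ to be real with the correct sign on each half-line, and integrating the resulting first-order ODE $\psi'=\mp c\psi$ gives the exponential profile. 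Your route is more elementary and self-contained, avoiding any appeal to an Euler--Lagrange derivation or to the uniqueness of solutions to \eqref{E:1-123}; the paper's route is more structural and makes transparent the connection to the ground state as a variational object. Both are correct, and the care you flag about passing from $\bar\psi\psi'=\mp c|\psi|^2$ to $\psi'=\mp c\psi$ on the zero set of $\psi$ is handled adequately by your observation that $|\psi'|=c|\psi|$ already forces $\psi'=0$ there.
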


This is proved in \S \ref{S:sharpGN}.

For \eqref{E:1-121}, sharp threshold conditions for global well-posedness and for blow-up are available in the $L^2$ supercritical case $p>1+\frac{4}{d}$.  These are given in Duyckaerts, Holmer, \& Roudenko \cite{MR2470397} for the case $p=3$, $d=3$, and for general $L^2$ supercritical, energy subcritical NLS, in Guevara \cite{MR3266698} and Fang, Xie, \& Cazenave, \cite{MR2838120}.  The analogous result for \eqref{E:1-120} is

\begin{theorem}[$L^2$ supercritical global existence/blow-up dichotomy]
\label{T:global-blowup}
Suppose that $\psi(t)$ is an $H_x^1$ solution of \eqref{E:1-120} for $p>3$ satisfying
$$M(\psi)^{\frac{1-\sigma_c}{\sigma_c}} E(\psi)< M(\varphi_0)^{\frac{1-\sigma_c}{\sigma_c}} E(\varphi_0)$$
Let
$$\eta(t) = \frac{ \|\psi\|_{L^2}^{\frac{1-\sigma_c}{\sigma_c}} \|\psi_x(t) \|_{L_x^2}}{\| \varphi_0 \|_{L^2}^{\frac{1-\sigma_c}{\sigma_c}} \|(\varphi_0)_x \|_{L^2}}$$
Then
\begin{enumerate}
\item If $\eta(0)<1$, then the solution $\psi(t)$ is global in both time directions and $\eta(t)<1$ for all $t\in \mathbb{R}$.
\item If $\eta(0)>1$, then the solution $\psi(t)$ blows-up in the negative time direction at some $T_-<0$, blows-up in the positive time direction at some $T_+>0$, and $\eta(t)>1$ for all $t\in (T_-,T_+)$.   
\end{enumerate}
\end{theorem}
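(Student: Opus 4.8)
The plan is to adapt the now-standard global/blow-up dichotomy argument of Duyckaerts--Holmer--Roudenko \cite{MR2470397}, Guevara \cite{MR3266698}, and Fang--Xie--Cazenave \cite{MR2838120}, with the sharp Gagliardo--Nirenberg inequality \eqref{E:1-126}, conservation of mass and energy, and the local virial identity of \S\ref{S:conserved} in place of the higher-dimensional ingredients. Write $\sigma_c=\frac{p-3}{2(p-1)}$ (so $\frac{1-\sigma_c}{\sigma_c}=\frac{p+1}{p-3}$), put $M=M(\psi)$ (constant in $t$), $B_0=M(\varphi_0)^{(1-\sigma_c)/\sigma_c}\|(\varphi_0)_x\|_{L^2}^2$, and $g(u)=\tfrac12 u-\tfrac{2}{p+1}u^{(p+1)/4}$, and define the scale-invariant ratio
$$\mathcal{ME}(\psi)\defeq \frac{M(\psi)^{(1-\sigma_c)/\sigma_c}E(\psi)}{M(\varphi_0)^{(1-\sigma_c)/\sigma_c}E(\varphi_0)},$$
so that $\eta(t)^2=B_0^{-1}M^{(1-\sigma_c)/\sigma_c}\|\psi_x(t)\|_{L^2}^2$ and the theorem's hypothesis reads $\mathcal{ME}(\psi)<1$. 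A direct computation from the Pohozhaev relations $\|\varphi_0\|_{L^2}^2=\|(\varphi_0)_x\|_{L^2}^2=\tfrac12|\varphi_0(0)|^{p+1}=2^{2/(p-1)}$ gives $B_0=2^{4/(p-3)}$, $E(\varphi_0)=2^{2/(p-1)}(\tfrac12-\tfrac{2}{p+1})>0$ (this positivity, which uses $p>3$, is what makes the threshold meaningful), and $M(\varphi_0)^{(1-\sigma_c)/\sigma_c}E(\varphi_0)=B_0\,g(1)$ with $g(1)=\tfrac{p-3}{2(p+1)}>0$.

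The first step is a master inequality. Raising \eqref{E:1-126} to the power $\tfrac{p+1}{2}$ gives $|\psi(0)|^{p+1}\le\|\psi\|_{L^2}^{(p+1)/2}\|\psi_x\|_{L^2}^{(p+1)/2}$, which an exponent count turns into $M^{(1-\sigma_c)/\sigma_c}|\psi(0)|^{p+1}\le\bigl(M^{(1-\sigma_c)/\sigma_c}\|\psi_x\|_{L^2}^2\bigr)^{(p+1)/4}$. Multiplying the energy identity $E(\psi)=\tfrac12\|\psi_x\|_{L^2}^2-\tfrac{1}{p+1}|\psi(0)|^{p+1}$ by $M^{(1-\sigma_c)/\sigma_c}$, inserting this bound, and using $B_0^{(p-3)/4}=2$, yields $M^{(1-\sigma_c)/\sigma_c}E(\psi)\ge B_0\,g(\eta(t)^2)$; dividing by $B_0\,g(1)>0$ gives
$$g\bigl(\eta(t)^2\bigr)\;\le\;\mathcal{ME}(\psi)\,g(1)\;<\;g(1)$$
for all $t$ in the maximal interval of existence. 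Now $g$ strictly increases on $[0,1]$, strictly decreases on $[1,\infty)$ to $-\infty$, with maximum $g(1)$; hence $\eta(t)\neq1$ for every $t$ (so the theorem's two cases are mutually exclusive and exhaustive under the hypothesis), and $\{u\ge0:g(u)\le\mathcal{ME}(\psi)g(1)\}=[0,u_-]\cup[u_+,\infty)$ for some $0\le u_-<1<u_+$ (if $\mathcal{ME}(\psi)<0$ there is no $u_-$, and then $\eta>1$ is forced, landing in case~(2)). Since $t\mapsto\|\psi_x(t)\|_{L^2}$ — hence $\eta$ — is continuous by Theorem~\ref{T:local}, and $\eta(t)^2$ can never enter the gap $(u_-,u_+)$, the value $\eta(t)^2$ is trapped in a single component of this sublevel set throughout the life of the solution.

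If $\eta(0)<1$, then $\eta(0)^2\le u_-$, so $\eta(t)^2\le u_-<1$ on the maximal interval; hence $\|\psi_x(t)\|_{L^2}$ stays bounded, and the blow-up alternative from the local theory forces the solution to be global with $\eta(t)<1$ throughout — part~(1). If $\eta(0)>1$, then $\eta(0)^2\ge u_+$, so $\eta(t)^2\ge u_+>1$ on the maximal interval $(T_-,T_+)$. To get finite-time blow-up, I would invoke the local virial identity of \S\ref{S:conserved}: for a smooth cutoff $\chi_R(x)=R^2\chi(x/R)$ with $\chi\ge0$ bounded, $\chi(y)=y^2$ for $|y|\le1$, and $\chi''\le2$ everywhere,
$$\frac{d^2}{dt^2}\int\chi_R(x)\,|\psi(x,t)|^2\,dx\;\le\;4(p+1)E(\psi)-2(p-3)\|\psi_x(t)\|_{L^2}^2+\frac{C\,M}{R^2}.$$
Since $4(p+1)g(1)=2(p-3)$ one has $4(p+1)E(\psi)=2(p-3)\mathcal{ME}(\psi)B_0M^{-(1-\sigma_c)/\sigma_c}$, while $\|\psi_x(t)\|_{L^2}^2\ge u_+B_0M^{-(1-\sigma_c)/\sigma_c}$; thus the right-hand side is at most $2(p-3)B_0M^{-(1-\sigma_c)/\sigma_c}\bigl(\mathcal{ME}(\psi)-u_+\bigr)+CMR^{-2}$, a strictly negative constant for $R$ large, because $\mathcal{ME}(\psi)<1<u_+$. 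With $V_R(t)=\int\chi_R|\psi|^2\,dx\ge0$ and $V_R(0),V_R'(0)$ finite (as $\chi_R,\chi_R'$ are bounded and $\psi_0\in H^1$), this gives $V_R''(t)\le-a<0$ on $(T_-,T_+)$; the usual convexity argument then forces $V_R$ negative in finite time in each direction, which is impossible, so $T_-$ and $T_+$ are finite, and the blow-up alternative plus the trapping give part~(2).

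The step I expect to be the main obstacle is this last one for initial data without finite variance (the generic case in $H^1$, when $\int x^2|\psi_0|^2\,dx=\infty$): the exact virial identity $\tfrac{d^2}{dt^2}\int x^2|\psi|^2\,dx=4(p+1)E(\psi)-2(p-3)\|\psi_x(t)\|_{L^2}^2$ cannot be applied directly, and one must instead run the argument with the truncated functional $V_R$ and absorb the $O(R^{-2})$ truncation error in the local virial identity. This succeeds precisely because two strict inequalities are available — the mass--energy gap $\mathcal{ME}(\psi)<1$ and the uniform lower bound $\eta(t)\ge\sqrt{u_+}>1$ coming from the trapping — which together let the negative term in $V_R''$ dominate the error for $R$ large, uniformly in $t$. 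The remaining ingredients (the exponent bookkeeping, the monotonicity of $g$, the convexity argument) are routine.
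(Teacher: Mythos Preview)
Your proposal is correct and follows essentially the same route as the paper's own proof: the master inequality you derive is exactly the paper's \eqref{E:1-155} (your $g(\eta^2)/g(1)$ equals the paper's $f(\eta)$), the trapping/continuity argument is identical, and for case~(2) both arguments use the localized virial with a bounded weight (your $\chi_R$ with $R\to\infty$ is the paper's $a$ with $\epsilon\to 0$) to get a strictly negative second derivative without assuming finite variance. Your final paragraph correctly identifies the one nontrivial point --- that the truncation error is beaten uniformly in $t$ thanks to the strict gap $\mathcal{ME}(\psi)<1<u_+$ --- which is precisely how the paper handles it as well.
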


This is proved in \S \ref{S:sharpGN}.  We remark that Case (2) of Theorem \ref{T:global-blowup} does not require the assumption of finite variance, in contrast to the known results for \eqref{E:1-121}.

Next we concentrate on the $L^2$ critical case $p=3$.  In the case of \eqref{E:1-121}, Weinstein \cite{MR691044} proved the sharp threshold condition for global existence, and Glassey \cite{MR0460850} proved that negative energy solutions of finite variance blow-up.  The analogous result for \eqref{E:1dcc} using \eqref{E:1-126} and the virial identity is the following.  We note that, in contrast to the known results for \eqref{E:1-121}, for \eqref{E:1dcc} we do not need the assumption of finite variance for the blow-up result. 

\begin{theorem}[$L^2$ critical global existence/blow-up dichotomy] 
\label{T:global-blowup-critical}
Suppose that $\psi(t)$ is an $H_x^1$ solution to \eqref{E:1dcc}.
\begin{enumerate}
\item If $M(\psi)<M(\phi_0)=2$, then $E(\psi)>0$ and $\psi(t)$ satisfies the bound
$$\|\psi_x(t)\|_{L^2}^2 \leq  \frac{2E(\psi)}{1- \frac12M(\psi)}$$
and is hence a global solution (no blow-up).  
\item If $E(\psi)<0$ then $\psi(t)$ blows-up in finite time.
\end{enumerate}
\end{theorem}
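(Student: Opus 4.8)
The plan is to establish (1) by feeding the sharp Gagliardo--Nirenberg inequality of Proposition~\ref{P:sharpGN} into the conserved energy, and (2) by a convexity (virial) argument built on the local virial identity of \S\ref{S:conserved}. Throughout I use that for $p=3$ the energy is $E(\psi)=\frac12\|\psi_x\|_{L^2}^2-\frac14|\psi(0)|^4$ and that $M(\psi)$ and $E(\psi)$ are conserved. For (1), Proposition~\ref{P:sharpGN} gives $|\psi(0)|^2\le\|\psi\|_{L^2}\|\psi_x\|_{L^2}$, hence $|\psi(0)|^4\le M(\psi)\|\psi_x\|_{L^2}^2$, so
\[
E(\psi)\ \ge\ \frac12\|\psi_x(t)\|_{L^2}^2-\frac14 M(\psi)\|\psi_x(t)\|_{L^2}^2\ =\ \frac12\bigl(1-\tfrac12 M(\psi)\bigr)\|\psi_x(t)\|_{L^2}^2 .
\]
When $M(\psi)<M(\phi_0)=2$ the prefactor is positive, forcing $E(\psi)\ge0$ (and $>0$ unless $\psi\equiv0$) and, on rearranging, $\|\psi_x(t)\|_{L^2}^2\le 2E(\psi)/(1-\frac12 M(\psi))$; by conservation of $M$ and $E$ this bound is uniform in $t$. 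The blow-up alternative noted after Theorem~\ref{T:local} (a finite maximal time forces $\|\psi_x(t)\|_{L^2}\to\infty$) then excludes blow-up in either time direction, so $\psi$ is global. This part is routine once Proposition~\ref{P:sharpGN} is available.

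For (2), I would run the classical virial argument, but localized in space so as to dispense with any finite-variance hypothesis. Fix an even weight $a\in C^4(\mathbb{R})$ with $a(x)=x^2$ for $|x|\le1$, with $a\ge0$ and $a$ bounded, and---crucially---with $a''\le2$ everywhere (such $a$ exists: let $a''$ dip below $0$ on a bounded set so that $a'$, and hence $a$, levels off), and set $a_R(x)=R^2 a(x/R)$, so that $a_R(x)=x^2$ near the origin, $a_R''\le2$, and $\|a_R''''\|_{L^\infty}\le C R^{-2}$. Put $\mathcal V_R(t)=\int a_R(x)|\psi(x,t)|^2\,dx\ge0$, which is finite because $a_R$ is bounded and $\psi(t)\in L^2$. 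The local virial identity of \S\ref{S:conserved}, applied to this weight, has the form
\[
\mathcal V_R''(t)=4\int a_R''\,|\psi_x|^2\,dx-\int a_R''''\,|\psi|^2\,dx-2a_R''(0)\,|\psi(0,t)|^4 ,
\]
and the nonlinear term is localized exactly at $x=0$, where $a_R''=2$; using $a_R''\le2$ on the first integral, the bound on $a_R''''$ on the second, and energy conservation, this yields
\[
\mathcal V_R''(t)\ \le\ 8\|\psi_x(t)\|_{L^2}^2-4|\psi(0,t)|^4+C R^{-2}M(\psi)\ =\ 16\,E(\psi)+C R^{-2}M(\psi).
\]
If $E(\psi)<0$, choose $R$ so large that $C R^{-2}M(\psi)\le -8E(\psi)$; then $\mathcal V_R''(t)\le 8E(\psi)<0$ on the whole lifespan. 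Were the solution global, concavity of $\mathcal V_R(t)-4E(\psi)t^2$ would give $\mathcal V_R(t)\le \mathcal V_R(0)+\mathcal V_R'(0)t+4E(\psi)t^2\to-\infty$ as $|t|\to\infty$, contradicting $\mathcal V_R\ge0$; hence the solution blows up in finite time.

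The step that takes real care is (2). First, one needs the local virial identity precisely in the above form, with the right numerical coefficient multiplying $|\psi(0)|^4$; this is supplied by \S\ref{S:conserved}, the coefficient being pinned down by integrating by parts across $x=0$ and invoking the jump condition $\psi_x(0+,t)-\psi_x(0-,t)=-|\psi(0,t)|^2\psi(0,t)$, which turns the resulting boundary term into $-2a''(0)|\psi(0,t)|^4$. Second---and this is what makes the argument go through for all $H^1$ data---because the nonlinearity lives at the single point $x=0$ and $a_R$ agrees with $x^2$ there, the spatial truncation introduces no error into the nonlinear term; the only errors sit in the kinetic term (absorbed via $a_R''\le2$) and in the $\int a_R''''|\psi|^2$ term (absorbed via $R^{-2}M(\psi)$), so no finite-variance assumption is required, in contrast to the translation-invariant $L^2$-critical equation \eqref{E:1-121}.
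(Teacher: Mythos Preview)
Your proposal is correct and follows essentially the same route as the paper: part (1) is identical (feed Proposition~\ref{P:sharpGN} into the conserved energy), and part (2) is the same localized virial argument with a bounded even weight equal to $x^2$ near the origin and satisfying $a''\le 2$, so that the only truncation errors come from the $a''''$ term and are $O(R^{-2})M(\psi)$. The paper's $\epsilon$ plays the role of your $R^{-1}$, and the paper supplies a slightly more explicit construction of the weight (via two antiderivatives of a compactly supported function $b$ with $b\equiv 2$ near $0$, $b\le 2$, and $\int_0^\infty b=0$), but the argument and its key point---that no finite-variance hypothesis is needed because the nonlinear contribution is concentrated at $x=0$ where $a''=2$ exactly---are the same.
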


This is proved in \S \ref{S:sharpGN}.  The global result (1) is sharp since (as we remark below) there exist finite time blow-up solutions $\psi$ with $M(\psi)=M(\varphi_0)$.  The blow-up result (2) is sharp since the solution $\psi(x,t) = e^{it}\varphi_0(x)$ is global and $E(\psi)=E(\varphi_0)=0$.   While all solutions of negative energy blow-up, there do exist positive energy blow-up solutions.

In the case $L^2$ critical case $p=3$, there is an additional symmetry, \emph{pseudoconformal transform}, which is 
$$\psi(x,t) \text{ solves \eqref{E:1dcc}} \qquad \mapsto \qquad \tilde \psi(x,t) = \frac{e^{ix^2/4t}}{t^{1/2}}\psi( \frac{x}{t}, - \frac{1}{t}) \text{ solves \eqref{E:1dcc}.}$$
This transformation is also valid for \eqref{E:1-121} in the $L^2$ critical case $p=1+\frac{4}{d}$, as was observed in that context by Ginibre \& Velo \cite{MR0533218}.  Applying the pseudoconformal transformation to the ground state solution $e^{it}\varphi_0(x)$, together with time reversal, time translation, and scaling symmetries, gives the solution $S_{\lambda,T_*}(x,t)$ to \eqref{E:1dcc}, for any $T_*>0$ and $\lambda>0$, where
\begin{equation}
\label{E:1-148}
S_{\lambda,T_*}(x,t) = \frac{e^{i/\lambda^2(T_*-t)} e^{-ix^2/4(T_*-t)}}{[\lambda(T_*-t)]^{1/2}} \varphi_0\left( \frac{x}{\lambda(T_*-t)} \right) \qquad \text{for }t<T_*
\end{equation}
This is a solution with initial condition
\begin{equation}
\label{E:1-149}
S_{\lambda,T_*}(x,0) = \frac{ e^{i/\lambda^2T_*} e^{-ix^2/4T_*}}{(\lambda T_*)^{1/2}} \varphi_0\left( \frac{x}{\lambda T_*} \right)
\end{equation}
that blows-up at forward time $t=T_*>0$.  Moreover, $\| S_{\lambda,T_*} \|_{L^2_x} = \|\varphi_0\|_{L^2}$, and hence by Theorem \ref{T:global-blowup-critical}, $S_{\lambda,T_*}$ is a \emph{minimal mass} blow-up solution to \eqref{E:1dcc}.  Remarks on the proof of the pseudoconformal transformation and the derivation of $S_{\lambda,T_*}(x,t)$ are included below in \S \ref{S:conserved}.

Tsutsumi \cite{MR1074950}  and Merle \& Tsutsumi \cite{MR1047566} showed that blow-up solutions to the $L^2$ critical case of \eqref{E:1-121} concentrate at least the ground state mass at each blow-up point.  The corresponding result for \eqref{E:1dcc} is

\begin{theorem}[Mass concentration of $L^2$-critical blow-up solutions]
\label{T:mass-conc}
Suppose that $\psi(t)$ is any $H^1$ solution to \eqref{E:1dcc} that blows-up at some forward time $T_*>0$.  Let $\mu(t)$ be any function so that $\lim_{t\nearrow T_*} \mu(t) = +\infty$.  Then
$$\liminf_{t\nearrow T_*} \int_{|x| \leq \mu(t) \|\psi_x(t)\|_{L^2}^{-1}} |\psi(x,t)|^2 \, dx \geq \|\varphi_0\|_{L^2}^2$$
\end{theorem}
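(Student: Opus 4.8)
### Proof plan for Theorem \ref{T:mass-conc}

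The plan is to mimic the classical Merle--Tsutsumi argument, exploiting the fact that the point nonlinearity only ``sees'' the value $\psi(0,t)$, so that both the sharp Gagliardo--Nirenberg inequality \eqref{E:1-126} and the variational characterization of $\varphi_0$ in Proposition \ref{P:sharpGN} are available. Fix a sequence $t_n \nearrow T_*$ and set $\rho_n = \|\psi_x(t_n)\|_{L^2}^{-1} \to 0$. Introduce the rescaled functions
\begin{equation*}
v_n(x) = \rho_n^{1/2} \, \psi(\rho_n x, t_n),
\end{equation*}
which is the $L^2$-critical ($p=3$) scaling; then $\|v_n\|_{L^2} = \|\psi(t_n)\|_{L^2} = M(\psi)^{1/2}$ is constant, $\|(v_n)_x\|_{L^2} = 1$, and a direct computation with $E(\psi(t_n)) = E(\psi_0)$ fixed shows $|v_n(0)|^4 = \frac12 \rho_n^2 (\|(v_n)_x\|_{L^2}^2 - 2E(\psi_0)) \to \frac12$, i.e. $|v_n(0)|^2 \to 2^{-1/2} = \frac12 \|\varphi_0\|_{L^\infty}^2$ using the Pohozhaev identities for $\varphi_0$. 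So the rescaled sequence has bounded $H^1$ norm, unit $\dot H^1$ norm, and a nonvanishing value at the origin.

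First I would extract a weak $H^1$ limit $v_n \rightharpoonup V$ (along a subsequence). The key point is that $V \not\equiv 0$: because the trace map $H^1(\mathbb{R}) \to \mathbb{C}$, $\psi \mapsto \psi(0)$, is compact on bounded sets of $H^1$ (it factors through the compact embedding $H^1_{\mathrm{loc}} \hookrightarrow C_{\mathrm{loc}}$), weak convergence $v_n \rightharpoonup V$ forces $v_n(0) \to V(0)$, hence $|V(0)|^2 = 2^{-1/2} > 0$. Now apply the sharp Gagliardo--Nirenberg inequality \eqref{E:1-126} to $V$ together with weak lower semicontinuity of the $L^2$ and $\dot H^1$ norms:
\begin{equation*}
2^{-1/2} = |V(0)|^2 \le \|V\|_{L^2}\|V_x\|_{L^2} \le \Bigl(\liminf_n \|v_n\|_{L^2}\Bigr)\Bigl(\liminf_n \|(v_n)_x\|_{L^2}\Bigr) = M(\psi)^{1/2}.
\end{equation*}
Wait --- this only gives $M(\psi) \ge 1/2$, which is not yet the ground state mass $\|\varphi_0\|_{L^2}^2 = 2$. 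The sharper route is to \emph{not} normalize away all the information: instead I would run the argument at the level of the \emph{concentration} rather than the total mass. Let me restructure.

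The correct approach: fix a function $\mu(t) \to \infty$ as in the statement, and suppose for contradiction that along some $t_n \nearrow T_*$ we have
\begin{equation*}
\int_{|x| \le \mu(t_n)\rho_n} |\psi(x,t_n)|^2 \, dx \; \longrightarrow \; m < \|\varphi_0\|_{L^2}^2 .
\end{equation*}
In terms of $v_n$, this reads $\int_{|x| \le \mu(t_n)} |v_n(x)|^2\,dx \to m$. Passing to a weak limit $v_n \rightharpoonup V$ in $H^1$, the compactness of the trace gives $|V(0)|^2 = 2^{-1/2}$ as above, and on any fixed ball $|x| \le R$ we have $v_n \to V$ in $L^2(|x|\le R)$ by Rellich, so $\int_{|x|\le R} |V|^2 \le \liminf_n \int_{|x|\le \mu(t_n)} |v_n|^2 = m$ (using $\mu(t_n) \to \infty$ to absorb the fixed $R$); letting $R \to \infty$ yields $\|V\|_{L^2}^2 \le m < \|\varphi_0\|_{L^2}^2$. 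I also need a nontrivial lower bound on $\|V_x\|_{L^2}$: by weak lower semicontinuity $\|V_x\|_{L^2} \le 1$, but I claim one can in fact argue $\|V_x\|_{L^2}^2 \ge $ something via the energy. Actually the clean finish uses \eqref{E:1-126} on $V$ directly: $2^{-1/2} = |V(0)|^2 \le \|V\|_{L^2}\|V_x\|_{L^2} \le \|V\|_{L^2} \cdot 1$, so $\|V\|_{L^2}^2 \ge 1/2$. This still falls short, so the final ingredient must be the \emph{rigidity}: one shows $V$ must saturate \eqref{E:1-126}. For that I would prove $\|(v_n)_x\|_{L^2} \to \|V_x\|_{L^2}$ (strong $\dot H^1$ convergence), which combined with $\|(v_n)_x\|_{L^2} = 1$ gives $\|V_x\|_{L^2} = 1$; then from $E(\psi_0) = \frac12\|(v_n)_x\|^2/\rho_n^{-?}\cdots$, tracking the energy identity $\rho_n^2 E(\psi_0) = \frac12 - \frac14|v_n(0)|^4 \to \frac12 - \frac18 = \frac38$, hmm, this forces $|v_n(0)|^4 \to \frac12$ consistently, and in the limit $V$ solves, by the variational characterization, $V = e^{i\theta}\alpha\varphi_0(\beta\cdot)$ with $\|V_x\|_{L^2}=1$ and $|V(0)|^2 = 2^{-1/2}$ pinning down $\alpha,\beta$; then $\|V\|_{L^2}^2 = \|\varphi_0\|_{L^2}^2 = 2$, contradicting $\|V\|_{L^2}^2 \le m < 2$.

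The main obstacle is establishing the rigidity step, i.e.\ upgrading the weak limit $V$ to an exact (rescaled) copy of $\varphi_0$. Concretely, one must show the rescaled sequence $v_n$ does not lose mass or gradient to spatial infinity beyond what is accounted for --- equivalently, that $V$ is a minimizer of the Weinstein functional $J(\psi) = \|\psi\|_{L^2}\|\psi_x\|_{L^2} / |\psi(0)|^2$. The device for this is to test the sharp Gagliardo--Nirenberg inequality against $v_n$ itself (where it is nearly saturated because $|v_n(0)|^2 \to 2^{-1/2}$ while $\|v_n\|_{L^2}\|(v_n)_x\|_{L^2} = M(\psi)^{1/2}$ only if $M(\psi) = 1/2$ --- but $M(\psi)$ is \emph{not} $1/2$ in general!), so one cannot expect $v_n$ to be near-optimal for arbitrary data. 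The resolution, following Merle--Tsutsumi, is to first reduce to the \emph{concentrated part}: decompose $v_n = \chi v_n + (1-\chi)v_n$ with $\chi$ a cutoff, show the energy of $v_n$ localizes near $0$ (using that the nonlinear energy is exactly $-\frac14|v_n(0)|^4$, a purely local quantity --- this is where the point nonlinearity makes life \emph{easier} than in \cite{MR1047566}), and apply the sharp inequality to the localized piece whose mass is exactly the local mass $m$; saturation then forces $m \ge \|\varphi_0\|_{L^2}^2$ directly. I would present the argument in this localized form, so that the compactness of the trace map and the equality case of Proposition \ref{P:sharpGN} do all the work, and the finite-variance assumption never enters.
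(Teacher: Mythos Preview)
Your overall strategy---rescale by $\rho_n = \|\psi_x(t_n)\|_{L^2}^{-1}$, pass to a weak $H^1$ limit $V$, use compactness of the trace to get $v_n(0)\to V(0)$, and apply the sharp Gagliardo--Nirenberg inequality \eqref{E:1-126} to $V$ together with weak lower semicontinuity of $\|V_x\|_{L^2}$---is exactly the paper's approach (the paper packages the last step as Lemma~\ref{L:3}). The problem is a computational slip that sends you on an unnecessary detour.

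Your energy computation is wrong. With $v_n(x)=\rho_n^{1/2}\psi(\rho_n x,t_n)$ and $\|(v_n)_x\|_{L^2}=1$, the rescaled energy is $E(v_n)=\rho_n^2 E(\psi_0)\to 0$, and since $E(v_n)=\tfrac12\|(v_n)_x\|_{L^2}^2-\tfrac14|v_n(0)|^4$ this gives
\[
|v_n(0)|^4 \;=\; 2\|(v_n)_x\|_{L^2}^2 - 4\rho_n^2 E(\psi_0)\;\longrightarrow\; 2,
\]
not $\tfrac12$. (Equivalently, in the paper's normalization $\rho(t)=\|\varphi_0'\|_{L^2}/\|\psi_x(t)\|_{L^2}$ one gets $|v(0,t)|^4\to |\varphi_0(0)|^4$.) With the correct limit, your \emph{first} attempt already closes: $|V(0)|^4=2$, $\|V_x\|_{L^2}\le \liminf\|(v_n)_x\|_{L^2}=1$, and \eqref{E:1-126} applied to $V$ yields
\[
2 \;=\; |V(0)|^4 \;\le\; \|V\|_{L^2}^2\,\|V_x\|_{L^2}^2 \;\le\; \|V\|_{L^2}^2,
\]
so $\|V\|_{L^2}^2\ge 2=\|\varphi_0\|_{L^2}^2$. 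Then Rellich on balls $\{|x|\le A\}$ and $\mu(t_n)\to\infty$ give $\liminf_n H(t_n)\ge \int_{|x|\le A}|V|^2$ for every $A$, hence $\ge \|V\|_{L^2}^2\ge \|\varphi_0\|_{L^2}^2$, which is the claim.

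Consequently, the entire ``rigidity'' program you propose---upgrading $V$ to an exact rescaled copy of $\varphi_0$, or localizing with cutoffs---is unnecessary, and in fact the rigidity you seek cannot hold in general: for supercritical-mass data one may well have $\|V\|_{L^2}^2>\|\varphi_0\|_{L^2}^2$ or $\|V_x\|_{L^2}<1$, so $V$ need not be a Weinstein minimizer. Drop everything after your first weak-limit paragraph, fix the value $|v_n(0)|^4\to 2$, and the proof is complete and coincides with the paper's.
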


This is proved in \S \ref{S:L2}.

Following Merle \cite{merle1993determination}, Hmidi-Keraani \cite{hmidi2005blowup} we can show that a minimal mass blow-up solution of \eqref{E:1dcc} is necessarily a pseudoconformal transformation of the ground state.

\begin{theorem}[characterization of minimal mass $L^2$ critical blow-up solutions]
\label{T:minimal}
Suppose that $\psi(t)$ is an $H^1$ solution to \eqref{E:1dcc} that blows-up at time $T_*>0$ and $\|\psi_0\|_{L^2}=\|\varphi_0\|_{L^2}$.  Then there exists $\theta\in \mathbb{R}$ and $\beta>0$ such that 
\begin{equation}
\label{E:1-147}
\psi_0(x) = e^{i\theta} e^{-i|x|^2/(4T_*)} \beta^{1/2}  \varphi_0(\beta x)
\end{equation}

\end{theorem}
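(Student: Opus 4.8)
The plan is to follow the now-classical profile-decomposition route of Hmidi–Keraani, adapted to the point-nonlinearity setting. Fix a minimal-mass blow-up solution $\psi(t)$ with blow-up time $T_*>0$, and set $\rho(t)=\|\varphi_0'\|_{L^2}/\|\psi_x(t)\|_{L^2}$, so $\rho(t)\to 0^+$ as $t\nearrow T_*$. First I would pick a sequence $t_n\nearrow T_*$ and rescale: $v_n(x)=\rho(t_n)^{1/2}\psi(\rho(t_n)x,t_n)$, which by construction satisfies $\|v_n\|_{L^2}=\|\varphi_0\|_{L^2}$, $\|(v_n)_x\|_{L^2}=\|\varphi_0'\|_{L^2}$, and, crucially, $E(v_n)=\rho(t_n)^2 E(\psi)\to 0$ (this uses the scaling of the energy $E$ for \eqref{E:1dcc}). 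Thus $(v_n)$ is a bounded sequence in $H^1$ with vanishing energy and mass equal to the minimal mass; I then need a concentration-compactness / profile-decomposition lemma for bounded $H^1(\mathbb R)$ sequences, extracting translation parameters $x_n^j$ and profiles $V^j$ with $v_n = \sum_{j=1}^{\ell} V^j(\cdot - x_n^j) + r_n^\ell$, where the remainder is small in the $L^\infty$-controlling sense and mass/gradient decouple in the limit.

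The second step is to show that exactly one profile survives and that the translations stay bounded. From $\sum_j\|V^j\|_{L^2}^2 \le \|\varphi_0\|_{L^2}^2$ and the sharp Gagliardo–Nirenberg inequality \eqref{E:1-126} (in the scale-invariant form $\tfrac12|\psi(0)|^4 \le \tfrac12\|\psi\|_{L^2}^2\|\psi'\|_{L^2}^2$ for $p=3$) together with $E(v_n)\to 0$, I would argue that the nonlinear term $\tfrac14|v_n(0)|^4$ must converge to $\tfrac12\|(v_n)_x\|_{L^2}^2$, forcing $v_n$ to essentially saturate \eqref{E:1-126}. Here the point nonlinearity actually simplifies matters relative to the translation-invariant case: the nonlinear functional only sees the value at $x=0$, so the profile that carries the nonlinearity is pinned, and any profile with $x_n^j\to\pm\infty$ contributes nothing to $|v_n(0)|$ while still costing mass and gradient — hence such profiles must be absent. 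The upshot is $v_n(\cdot + x_n) \to V$ strongly in $H^1$ along a subsequence, with $\|V\|_{L^2}=\|\varphi_0\|_{L^2}$, $\|V'\|_{L^2}=\|\varphi_0'\|_{L^2}$, and $E(V)\le \liminf E(v_n)=0$; by Proposition \ref{P:sharpGN} (the equality case), $V$ is, up to phase and the allowed dilation (which is $1$ here by the normalization), a translate of $\varphi_0$, and the pinning forces $x_n\to 0$. So in fact $v_n\to e^{i\gamma}\varphi_0$ in $H^1$ for some $\gamma$, i.e. $\rho(t_n)^{1/2}\psi(\rho(t_n)\,\cdot\,,t_n)\to e^{i\gamma}\varphi_0$.

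The third and final step upgrades this along-a-sequence compactness to the rigid identification \eqref{E:1-147} of the \emph{initial datum}. This is where I expect the real work to lie, and I would follow Merle's original argument: use the compactness just obtained, conservation of mass and energy, and the $L^2$-critical \emph{virial/pseudoconformal} identity (the analogue for \eqref{E:1dcc} of the Glassey computation, namely control of $\tfrac{d^2}{dt^2}\int |x|^2|\psi|^2$, together with the variance identity $\tfrac{d}{dt}\int |x|^2|\psi|^2 = 4\,\Im\int \bar x\,\psi\,\psi_x$) to show that $\int |x|^2|\psi(x,t)|^2\,dx$ is finite and that the quantity $\psi$ is, for every $t<T_*$, an exact pseudoconformal–scaled–translated copy of $e^{it'}\varphi_0$; propagating back to $t=0$ and matching parameters yields \eqref{E:1-147} with $\beta$ determined by the blow-up rate and the quadratic phase $e^{-i|x|^2/(4T_*)}$ coming from the pseudoconformal factor in $S_{\lambda,T_*}(x,0)$ of \eqref{E:1-149}. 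The main obstacle is justifying finite variance for a general $H^1$ minimal-mass solution (we are not assuming it a priori) and then making the virial computation rigorous in the presence of the $\delta$-interaction — the local virial identity from \S\ref{S:conserved} must be invoked carefully, since the pointwise jump condition \eqref{E:jump} contributes boundary terms at $x=0$; once those terms are shown to have the correct sign / to vanish at the minimal-mass threshold, the rest is the standard Merle rigidity argument.
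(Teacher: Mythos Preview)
Your first two steps are more elaborate than this setting requires: because the nonlinearity is pinned at $x=0$, there is no loss of compactness by translation, and a bare weak-$H^1$ subsequence argument (Proposition~\ref{P:RK} and Lemma~\ref{L:3}) already yields the mass concentration you need, with no profile decomposition. More importantly, the $H^1$ convergence $v_n\to e^{i\gamma}\varphi_0$ that you obtain in step~2 does \emph{not} feed into step~3: in the original variables that convergence reads $\rho(t_n)^{1/2}\psi(\rho(t_n)\,\cdot\,,t_n)\to\varphi_0$, and since $\int|x|^2|\psi(x,t_n)|^2\,dx=\rho(t_n)^{-2}\int|y|^2|v_n(y)|^2\,dy$ with $\rho(t_n)\to 0$, you cannot extract finite variance from it.

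The genuine gap is the missing mechanism for finite variance. You correctly flag it as ``the main obstacle'' but offer no way past it; the difficulty is not the boundary terms at $x=0$ in the local virial identity (those are already accounted for in \eqref{E:local-virial}). The key idea --- the heart of the Hmidi--Keraani simplification of Merle --- is a \emph{discriminant trick}: since $\|\psi(t)\|_{L^2}=\|\varphi_0\|_{L^2}$, Proposition~\ref{P:sharpGN} gives $E(e^{is\beta_p}\psi(t))\geq 0$ for every real $s$ and every truncated quadratic weight $\beta_p$; but $s\mapsto E(e^{is\beta_p}\psi)$ is a quadratic polynomial with nonnegative leading coefficient, so its discriminant is $\leq 0$, which yields $|g_p'(t)|\leq C\,g_p(t)^{1/2}$ for $g_p(t)=\int\beta_p|\psi|^2$. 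Combined with $g_p(t_n)\to 0$ (a consequence of Theorem~\ref{T:mass-conc}, not of your $H^1$ compactness), this integrates to $g_p(t)\leq C(T_*-t)^2$ \emph{uniformly in $p$}, whence $\int|x|^2|\psi(t)|^2<\infty$ and $g(t)\defeq\int|x|^2|\psi|^2\leq C(T_*-t)^2$. Now the exact virial identity $g''=16E(\psi_0)$ integrates to $g(t)=8t^2E(e^{i|x|^2/(4t)}\psi_0)$; sending $t\nearrow T_*$ gives $E(e^{i|x|^2/(4T_*)}\psi_0)=0$, so $e^{i|x|^2/(4T_*)}\psi_0$ saturates \eqref{E:1-126}, and Proposition~\ref{P:sharpGN} delivers \eqref{E:1-147} directly. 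No ``propagation back to $t=0$'' is needed --- the initial datum is identified in one stroke.
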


This is proved in \S \ref{S:L2}.  Note that \eqref{E:1-147} matches \eqref{E:1-149} with $\beta = 1/(\lambda T_*)$, so the solution $\psi(t)$ is given by $S_{\lambda,T_*}(x,t)$ expressed in \eqref{E:1-148}, up to a phase factor.

Finally, we consider the behavior of \emph{near minimal mass} blow-up solutions $\psi$ in the $L^2$ critical case.  By this we mean solutions $\psi$ for which
\begin{equation}
\label{E:near-minimal}
2= M(\varphi_0) < M(\psi) < M(\varphi_0)+\delta = 2+\delta
\end{equation}
and for which there exists $T_*>0$ such that $\psi(t)$ blows-up at $t=T_*$.  (Blow-up will necessarily occur if $E(\psi)<0$ by Theorem \ref{T:global-blowup-critical}(2), and can possibly occur if $E(\psi)\geq 0$)

\begin{theorem}[$L^2$ critical near minimal mass blow-up solutions]
\label{T:near-minimal}
For each $\epsilon>0$, there exists $\delta>0$ such that if $\psi(t)$ satisfies \eqref{E:near-minimal} and $\psi(t)$ blows-up at time $t=T_*>0$, then for all $t$ sufficiently close to $T_*$ there exist $\theta(t)\in \mathbb{R}$ and $\rho(t)>0$ such that
$$\| e^{-i\theta(t)} \rho(t)^{1/2}\psi(\rho(t)x,t) - \varphi_0(x) \|_{H_x^1} \leq \epsilon$$
The parameters $\rho(t)$ and $\theta(t)$ can be chosen to be continuous.
\end{theorem}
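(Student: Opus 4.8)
The plan is a concentration--compactness argument by contradiction. The feature that makes this lighter than the classical $L^2$-critical blow-up analysis (Merle--Tsutsumi, Hmidi--Keraani) is that the relevant defect-of-compactness quantity here is the \emph{point evaluation} $v\mapsto v(0)$, which is a bounded linear functional on $H^1(\mathbb{R})$ by the one-dimensional Sobolev embedding $H^1\hookrightarrow C_0$, hence weakly continuous; so I expect no translation parameter and no multi-bubble splitting to appear, and a single weak limit to capture everything.

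First I negate the statement: fix $\epsilon_0>0$ and solutions $\psi_n$ of \eqref{E:1dcc} with $2=M(\varphi_0)<M(\psi_n)<2+\tfrac1n$, each blowing up at some finite $T_{*,n}>0$, together with times $t_n<T_{*,n}$ for which $\inf_{\theta\in\mathbb{R},\,\rho>0}\|e^{-i\theta}\rho^{1/2}\psi_n(\rho\,\cdot\,,t_n)-\varphi_0\|_{H^1_x}\ge\epsilon_0$. Because the negation only furnishes such ``bad'' $t_n$ arbitrarily close to $T_{*,n}$, and because $\|\partial_x\psi_n(t)\|_{L^2}\to\infty$ as $t\nearrow T_{*,n}$ while $E(\psi_n)$ is a fixed finite number, I may in addition require $T_{*,n}-t_n<\tfrac1n$ and $\|\partial_x\psi_n(t_n)\|_{L^2}^2>n(1+|E(\psi_n)|)$. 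Next I rescale: set $\mu_n:=\|\partial_x\varphi_0\|_{L^2}/\|\partial_x\psi_n(t_n)\|_{L^2}\to0$ and $v_n(x):=\mu_n^{1/2}\psi_n(\mu_n x,t_n)$. Then mass conservation gives $\|v_n\|_{L^2}^2=M(\psi_n)\to2=\|\varphi_0\|_{L^2}^2$; the choice of $\mu_n$ together with the Pohozhaev identity $\|\partial_x\varphi_0\|_{L^2}^2=2$ gives $\|v_n'\|_{L^2}^2=2$; and energy conservation gives $E(v_n)=\mu_n^2E(\psi_n)=\|\partial_x\varphi_0\|_{L^2}^2\,E(\psi_n)/\|\partial_x\psi_n(t_n)\|_{L^2}^2\to0$. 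Since $E(v_n)=1-\tfrac14|v_n(0)|^4$ for \eqref{E:1dcc}, this forces $|v_n(0)|^2\to2$, so $\{v_n\}$ is an asymptotically optimal sequence for the sharp Gagliardo--Nirenberg inequality \eqref{E:1-126}.

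The key step is the rigidity. Passing to a subsequence, $v_n\rightharpoonup V$ in $H^1$; weak continuity of point evaluation gives $|V(0)|^2=2$, while weak lower semicontinuity gives $\|V\|_{L^2}^2\le2$ and $\|V'\|_{L^2}^2\le2$. By Proposition \ref{P:sharpGN}, $2=|V(0)|^2\le\|V\|_{L^2}\|V'\|_{L^2}\le2$, so $\|V\|_{L^2}^2=\|V'\|_{L^2}^2=2$ and equality holds in \eqref{E:1-126}; the equality case of Proposition \ref{P:sharpGN} then forces $V=e^{i\theta}\alpha\,\varphi_0(\beta\,\cdot\,)$, and the two norm identities force $\alpha=\beta=1$, i.e.\ $V=e^{i\theta}\varphi_0$. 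Finally $\|v_n\|_{H^1}\to\|V\|_{H^1}$ together with $v_n\rightharpoonup V$ upgrades to strong convergence $v_n\to e^{i\theta}\varphi_0$ in $H^1$, which (taking $\rho=\mu_n$ and phase $\theta$) contradicts the lower bound $\epsilon_0$. This proves the qualitative statement: for each $\epsilon>0$ there is $\delta>0$ so that, under \eqref{E:near-minimal} and the blow-up hypothesis, $\inf_{\theta,\rho}\|e^{-i\theta}\rho^{1/2}\psi(\rho\,\cdot\,,t)-\varphi_0\|_{H^1}\le\epsilon$ for all $t$ sufficiently close to $T_*$. To get the \emph{continuous} choice of $(\theta(t),\rho(t))$ I would apply the standard modulation lemma: since $\{i\varphi_0,\Lambda\varphi_0\}$, with $\Lambda\varphi_0:=\tfrac12\varphi_0+x\varphi_0'$, is an $L^2$-orthogonal pair of nonzero vectors (tangent to the symmetry orbit of $\varphi_0$), the implicit function theorem yields $C^1$ parameters $(\theta(u),\rho(u))$ on a tubular neighborhood of that orbit with $\|e^{-i\theta(u)}\rho(u)^{1/2}u(\rho(u)\,\cdot\,)-\varphi_0\|_{H^1}\lesssim\operatorname{dist}(u,\text{orbit})$; continuity of $t\mapsto\psi(t)$ into $H^1$ (Theorem \ref{T:local}) then makes $\theta(t),\rho(t)$ continuous, and running the previous step with target $\epsilon'\simeq\epsilon$ absorbs the implied constant.

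The hard part will be bookkeeping rather than anything analytically deep: in the second step the blow-up hypothesis must be used \emph{essentially}, through the freedom to push $t_n$ toward $T_{*,n}$, to force the rescaled energy $E(v_n)\to0$ even though $E(\psi_n)$ is \emph{not} assumed bounded in $n$. The second delicate point is verifying that the two normalizations --- mass, inherited from near-minimality, and Dirichlet energy, imposed by the rescaling --- are \emph{exactly} compatible with the equality case of \eqref{E:1-126}, which is what pins the weak limit down to $\varphi_0$ itself rather than to a rescaled copy $\alpha\varphi_0(\beta\,\cdot\,)$. Everything else --- the weak compactness and the rigidity of Proposition \ref{P:sharpGN} --- is essentially forced.
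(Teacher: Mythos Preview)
Your argument is correct and is essentially the paper's own proof: the paper isolates the compactness/rigidity step as a separate stability lemma (Lemma~\ref{L:2}, proved by exactly your sequence argument using Prop.~\ref{P:RK} for weak continuity of $v\mapsto v(0)$ and the equality case of Prop.~\ref{P:sharpGN}), then applies it via Corollary~\ref{C:near-minimal} to the rescaling $\tilde v(x)=\rho^{1/2}\psi(\rho x,t)$ with $\rho=\|\varphi_0'\|_{L^2}/\|\psi_x(t)\|_{L^2}$, using $E(\tilde v)=\rho^2 E(\psi)\to 0$ as $t\nearrow T_*$. The only difference is packaging --- you run one global contradiction rather than lemma $\to$ corollary $\to$ theorem --- and your treatment of the continuity of $(\theta(t),\rho(t))$ via the implicit function theorem is actually more explicit than the paper's (which takes $\rho(t)$ from the formula above, hence manifestly continuous, but does not spell out continuity of $\theta$).
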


This is proved in \S \ref{S:near-minimal}.  It states that near minimal mass blow-up solutions are, up to scaling and phase, perturbations of the ground state.  In Paper III-IV, we will construct two families of near minimal mass blow-up solutions for \eqref{E:1dcc}.  In Paper III, we will construct the Bourgain \& Wang \cite{MR1655515} blow-up solutions, which have positive energy and for which $\|\psi_x(t)\|_{L_x^2} \sim (T_*-t)^{-1}$, and in Paper IV, we will construct the log-log blow-up solutions \cite{MR807329, MR966356, perelman2001formation, MR1995801, MR2061329, MR2150386, MR2116733, MR2169042, MR2122541} that have negative energy and for which $\|\psi_x(t)\|_{L_x^2} \sim (T_*-t)^{-1/2}(\log\log(T_*-t)^{-1})^{1/2}$.  In Paper II, we will give single- and multi-bump blow-up profiles \cite{MR1349311, MR1699715, MR1975789, MR2729284} in the slightly $L^2$ supercritical regime, $3<p<3+\delta$.

Textbook treatments of blow-up for NLS-type equations are Sulem \& Sulem \cite{MR1696311} and Fibich \cite{MR3308230}.  To our knowledge, blow-up for concentrated point nonlinearities has only been studied in the 3D case in Adami, Dell'Antonio, Figari, \& Teta \cite{MR2037249} (see also the announcement Adami \cite{MR1946010}), obtaining some blow-up criteria from the virial identity and observing the explicit blow-up solution obtained in the $L^2$-critical case by applying the pseudoconformal transformation to the solitary wave.

\subsection*{Acknowledgments}

We thank Catherine Sulem, Galina Perelman, and Maciej Zworski for discussions about this topic, suggestions, and encouragement.  The material in this paper will be included as part of the PhD thesis of the second author at Brown University.  While this work was completed, the first author was supported in part by NSF grants DMS-1200455,  DMS-1500106.  The second author was supported in part by NSF grant DMS-1200455 (PI Justin Holmer).

\section{Conserved quantities and transformations}
\label{S:conserved}

To derive the energy conservation law (and other conservation laws), one assumes that the solution has higher regularity, and then the case of a general solution belonging to $C([0,T]; H_x^1)$ follows by an approximation argument.  Specifically, one assumes that $\psi(x,t)$, $\psi_x(x,t)$, $\psi_{xx}(x,t)$, and $\psi_t(x,t)$ are continuous for $x\neq 0$, all $t$, and each have left- and right-hand limits at $x=0$.  Moreover, the following juncture conditions hold
\begin{equation}
\label{E:jump2}
\begin{aligned}
& \psi(0) \defeq \psi(0-)= \psi(0+) \\
& \psi_x(0+)-\psi_x(0-) = - |\psi(0)|^{p-1}\psi(0) \\
& \psi_{xx}(0) \defeq \psi_{xx}(0-)=\psi_{xx}(0+) \\
& \psi_t(0) \defeq \psi_t(0-)=\psi_t(0+)
\end{aligned}
\end{equation}
Thus, $\psi$ ,$\psi_{xx}$, $\psi_t$ are assumed continuous across $x=0$, but note that due to the jump in $\psi_x$ across $x=0$, it follows that $\psi_{xx}$, when computed in the distributional sense, is equal to a continuous function (that we denote $\psi_{xx}$) \emph{plus} the distribution $- \delta(x) |\psi(0)|^{p-1}\psi(0)$.  One can now use \eqref{E:jump2} plus the fact that $i\psi_t + \psi_{xx}=0$ for $x\neq 0$ to derive the energy conservation.  Indeed, one pairs the equation with $\bar \psi_t$, integrates $x$ over $(-\infty,+\infty)$ and takes the real part to obtain
$$0= \Re \int_{-\infty}^{+\infty} \psi_{xx} \bar \psi_t \, dx $$
When integrating by parts, however, one must remember the jump in $\psi_x$ and that yields boundary terms at $x=0$
\begin{align*}
0 &=- \Re \int_{-\infty}^{+\infty} \psi_x \bar \psi_{xt} \, dx - \Re \psi_x(0+) \bar \psi_t(0+) + \Re \psi_x(0-) \bar \psi_t(0-) \\
&=- \Re \int_{-\infty}^{+\infty} \psi_x \bar \psi_{xt} \, dx - \Re (\psi_x(0+)-\psi_x(0-) \bar \psi_t(0) \\
&= - \Re \int_{-\infty}^{+\infty} \psi_x \bar \psi_{xt} \, dx + \Re |\psi(0)|^{p-1} \psi(0) \bar \psi_t(0) \\
& = \frac{d}{dt} \left( - \frac12 \int |\psi_x|^2 \, dx + \frac{1}{p+1} |\psi(0)|^{p+1} \right)
\end{align*}

The standard NLS \eqref{E:1-121} satisfies the virial identity
$$\partial_t^2 \int |x|^2 |\psi|^2 \, dx = 8 \| \nabla \psi \|_{L^2}^2 - \frac{4d(p-1)}{p+1} \| \psi \|_{L^{p+1}}^{p+1}$$
$$ = 4d(p-1)E(\psi) + (8-2d(p-1))\|\nabla \psi \|_{L^2}^2$$
In the $L^2$ critical case $p=1+\frac{4}{d}$, this reduces to just $16E(\psi)$.  

In the case of the point nonlinearity \eqref{E:1-120}, the virial identity is
\begin{align*}
\partial_t^2 \int |x|^2 |\psi|^2 \, dx &= 8 \| \psi_x\|_{L^2}^2  - 4|\psi(0)|^{p+1} \\
&= 4(p+1)E(\psi) + (8-2(p+1))\|\psi_x\|_{L^2}^2
\end{align*}
In fact, we have a generalization, the local virial identity, which is the following.  For weight function $a(x)$ satisfying  $a(0)=a_x(0)=a_{xxx}(0)=0$, solution $\psi$ to \eqref{E:1-120} satisfy
\begin{equation}
\label{E:local-virial}
\partial_t^2 \int a(x) |\psi|^2 \, dx = 4 \int a_{xx} |\psi_x|^2 - 2 a_{xx}(0)|\psi(0)|^{p+1}-\int a_{xxxx} |\psi|^2
\end{equation}

We will now prove \eqref{E:local-virial} for $\psi$ satisfying the jump conditions \eqref{E:jump2}.  Since for $x\neq 0$, $\psi_t = i \psi_{xx}$, we have
$$\partial_t \int_{-\infty}^{+\infty} a|\psi|^2 = 2 \Re \int_{-\infty}^{+\infty} a \bar \psi \psi_t = -2 \Im \int_{-\infty}^{+\infty} a \bar \psi \psi_{xx}$$ 
Integration by parts gives no boundary terms since $a(0)=0$, 
$$\partial_t \int_{-\infty}^{+\infty} a|\psi|^2= 2 \Im \int_{-\infty}^{+\infty} a_x \bar \psi \psi_x$$
Taking another time derivative gives
$$\partial_t^2 \int_{-\infty}^{+\infty} a |\psi|^2 = 2 \Im \int_{-\infty}^{+\infty} a_x \bar \psi_t \psi_x + 2\Im \int_{-\infty}^{+\infty} a_x \bar \psi \psi_{xt}$$
In the second term, we integrate by parts in $x$, which again leaves no boundary terms since $a_x(0)=0$,
$$\partial_t^2 \int_{-\infty}^{+\infty} a |\psi|^2= -4 \Im \int_{-\infty}^{+\infty} a_x \bar \psi_x \psi_t - 2\Im \int_{-\infty}^{+\infty} a_{xx} \bar \psi \psi_t$$
Substituting $\psi_t = i \psi_{xx}$,
\begin{equation}
\label{E:1-156}
\partial_t^2 \int_{-\infty}^{+\infty} a |\psi|^2= - 4\Re \int_{-\infty}^{+\infty} a_x \bar \psi_x \psi_{xx} - 2 \Re \int_{-\infty}^{+\infty} a_{xx} \bar \psi \psi_{xx} = \text{I} + \text{II}
\end{equation}
For $\text{I}$, we find
$$\text{I} = -2 \int_{-\infty}^{+\infty} a_x (|\psi_x|^2)_x$$
Integrating by parts in $x$ leaves no boundary terms since $a_x(0)=0$,
\begin{equation}
\label{E:1-157}
\text{I} = 2 \int_{-\infty}^{+\infty} a_{xx} |\psi_x|^2 
\end{equation}
For term $\text{II}$, integration by parts does leave boundary terms since $a_{xx}(0)\neq 0$
$$\text{II} = 2 \Re \int_{-\infty}^{+\infty} (a_{xx}\bar\psi)_x \psi_x + 2\Re a_{xx}(0) \bar \psi(0) (\psi_x(0+)-\psi_x(0-))$$
Substituting the boundary condition
\begin{align*}
\text{II} &= 2 \Re \int_{-\infty}^{+\infty} a_{xxx} \bar \psi \psi_x + 2 \int_{-\infty}^{+\infty} a_{xx} |\psi_x|^2 - 2 a_{xx}(0) |\psi(0)|^{p+1} \\\
&=  \int_{-\infty}^{+\infty} a_{xxx} (|\psi|^2)_x+ 2  \int_{-\infty}^{+\infty} a_{xx} |\psi_x|^2 - 2 a_{xx}(0) |\psi(0)|^{p+1} 
\end{align*}
Integrating by parts in the first term leaves no boundary terms since $a_{xxx}(0)=0$,
\begin{equation}
\label{E:1-158}
\text{II}=  -\int_{-\infty}^{+\infty} a_{xxxx}|\psi|^2 + 2 \int_{-\infty}^{+\infty} a_{xx} |\psi_x|^2 - 2 a_{xx}(0) |\psi(0)|^{p+1}
\end{equation}
Inserting \eqref{E:1-157} and \eqref{E:1-158} into \eqref{E:1-156} gives \eqref{E:local-virial}.

In the case $p=3$, there is an additional symmetry, \emph{pseudoconformal transform}, which is 
$$\psi(x,t) \text{ solves \eqref{E:1dcc}} \qquad \mapsto \qquad \tilde \psi(x,t) = \frac{e^{ix^2/4t}}{t^{1/2}}\psi( \frac{x}{t}, - \frac{1}{t}) \text{ solves \eqref{E:1dcc}.}$$  
Indeed, this is verified in two steps.  First, direct computation shows that
$$i\partial_t \tilde \psi + \partial_x^2 \tilde \psi = \frac{e^{-ix^2/4t}}{t^{5/2}}(i\partial_t \psi + \partial_x^2\psi) \left( \frac{x}{t}, - \frac{1}{t} \right) \quad \text{for }x\neq 0$$
Hence $\tilde \psi$ satisfies $i\partial_t\tilde\psi + \partial_x^2 \tilde \psi =0$ for $x\neq 0$ provided $\psi$ satisfies $i\partial_t\psi + \partial_x^2 \psi=0$ for $x \neq 0$.  Moreover, since
$$\tilde\psi_x(0\pm,t) = \frac{1}{t^{3/2}} \psi_x\left(0\pm, - \frac{1}{t} \right)$$
it follows that
$\tilde \psi$ satisfies the jump conditions \eqref{E:jump} if and only $\psi$ satisfies them.

Applying the pseudoconformal transformation to the ground state $\psi(x,t) = e^{it}\varphi_0(x)$, we obtain the solution
$$S_1(x,t) = \frac{e^{-i/t} e^{ix^2/4t}}{t^{1/2}} \varphi_0\left( \frac{x}{t} \right) \quad \text{for }t>0$$
Applying scaling gives
$$S_2(x,t) = \frac{e^{-i/\lambda^2 t} e^{ix^2/4t}}{(\lambda t)^{1/2}} \varphi_0\left( \frac{x}{\lambda t} \right) \quad \text{for }t>0$$
Using the time reversal symmetry ($\psi(x,t)$ solves \eqref{E:1dcc} implies that $\overline{\psi(x,-t)}$ solves \eqref{E:1dcc}) yields the solution
$$S_3(x,t) = \frac{e^{-i/\lambda^2 t} e^{ix^2/4t}}{(-\lambda t)^{1/2}} \varphi_0\left( -\frac{x}{\lambda t} \right) \quad \text{for }t<0$$
Replacing $t$ by $t-T_*$, for some $T_*>0$ gives the solution \eqref{E:1-148} quoted in the introduction.

\section{Sharp Gagliardo-Nirenberg inequality and applications}

\label{S:sharpGN}

In this section, we prove Prop. \ref{P:sharpGN}, Theorems \ref{T:global-blowup} and \ref{T:global-blowup-critical} on the sharp Gagliardo-Nirenberg inequality and its application to prove sharp criteria for global well-posedness and blow-up in both the $L^2$ supercritical and $L^2$ critical cases.

\begin{proof}[Proof of Prop. \ref{P:sharpGN}]
\begin{align*}
|\psi(0)|^2 &= \frac12 \int_{-\infty}^0 \frac{d}{dx} |\psi(x)|^2 \, dx - \frac12 \int_0^{+\infty} \frac{d}{dx} |\psi(x)|^2 \, dx \\
&= -\Re \int_{-\infty}^{+\infty} (\sgn x) \overline{\psi(x)} \psi'(x) \, dx
\end{align*}
The inequality then follows by Cauchy-Schwarz.  The fact that any $\psi$ of the form $\psi(x) =e^{i\theta} \alpha \varphi_0(\beta x)$ yields equality follows by direct computation.   Indeed, for any $p>1$,
$$\frac{ \|\varphi_0 \|_{L^2}^{1/2} \| (\varphi_0)_x \|_{L^2}^{1/2}}{\| \varphi_0 \|_{L^\infty}} = \int_{-\infty}^{+\infty} e^{-2|x|} \, dx = 1$$

 Now suppose that $|\psi(0)|^2 = \|\psi\|_{L^2} \|\psi'\|_{L^2}$.  Then $\psi$ is a minimizer (over $H^1$) of the functional
$$I(u) = \frac{ \|u\|_{L^2}^2 \|u'\|_{L^2}^2 }{ |u(0)|^4}$$
Hence $\psi$ solves the Euler-Lagrange equation
$$ 0 = \psi - \frac{\| \psi \|_{L^2}^2}{\|\psi'\|_{L^2}^2} \psi'' - \frac{2 \|\psi\|_{L^2}^2 }{|\psi(0)|^4} \delta |\psi|^2 \psi $$
Let $\tilde\psi(x) = \alpha^{-1}\psi(\beta^{-1} x)$ for some $\alpha>0$, $\beta>0$.  Then $\tilde\psi(x)$ solves 
$$ 0 = \tilde \psi - \frac{\beta^2 \|\psi\|_{L^2}^2}{ \|\psi'\|_{L^2}^2} \tilde \psi'' - \frac{2 \alpha^2\beta \|\psi \|_{L^2}^2 }{ |\psi(0)|^4} \delta |\tilde \psi|^2 \tilde \psi$$
Taking $\beta = \frac{\|\psi'\|_{L^2}}{\|\psi\|_{L^2}}$ and $\alpha = \frac{|\psi(0)|^2}{\sqrt{2} \|\psi\|_{L^2}^{1/2} \|\psi'\|_{L^2}^{1/2}}$ we obtain
$$0 = \tilde \psi - \tilde \psi'' - \delta |\tilde \psi|^2 \tilde \psi$$
The unique solution of this equation is $\tilde \psi(x) = e^{i\theta} \varphi_0(x)$. 
\end{proof}

We remark that the above proof is much more direct that its counterpart (Weinstein \cite{MR691044}) for \eqref{E:1-124}, the solitary wave profile for standard NLS, which relies on concentration compactness to construct a minimizer to a variational problem.  An overview of this result is included in Tao \cite[Apx. B]{MR2233925}.

\begin{proof}[Proof of Theorem \ref{T:global-blowup}]
Recall $\sigma_c = \frac12 - \frac1{p-1}$.  By direct computation, we find that
\begin{equation}
\label{E:1-161}
\frac{E(\varphi_0)}{\|(\varphi_0)_x\|_{L^2}^2} = \frac{p-3}{2(p+1)}  = \frac12 - \frac{2}{p+1}
\end{equation}
Moreover,
$$M(\psi)^{\frac{1-\sigma_c}{\sigma_c}} E(\psi) = \frac12 \|\psi \|_{L^2}^{\frac{2(1-\sigma_c)}{\sigma_c}} \|\psi_x \|_{L^2}^2 - \frac{1}{p+1} \|\psi\|_{L^2}^{\frac{2(1-\sigma_c)}{\sigma_c}} |\psi(0)|^{p+1}$$
Applying \eqref{E:1-126}
$$ M(\psi)^{\frac{1-\sigma_c}{\sigma_c}} E(\psi)\geq  \frac12 \|\psi \|_{L^2}^{\frac{2(1-\sigma_c)}{\sigma_c}} \|\psi_x \|_{L^2}^2 - \frac{1}{p+1} \|\psi\|_{L^2}^{\frac{p+1}{2}+\frac{2(1-\sigma_c)}{\sigma_c}} \|\psi_x\|_{L^2}^{\frac{p+1}{2}}$$
Using that $\frac{1-\sigma_c}{\sigma_c} = \frac{p+1}{p-3}$ and $\frac{p+1}{2} + \frac{2(1-\sigma_c)}{\sigma_c} = \frac{(p+1)^2}{2(p-3)}$, we can reexpress the right side to obtain
$$  M(\psi)^{\frac{1-\sigma_c}{\sigma_c}} E(\psi) \geq \frac12 \rho(t)^2 - \frac{1}{p+1} \rho(t)^{\frac{p+1}{2}}$$
where
$$\rho(t) = \| \psi\|_{L^2}^{\frac{1-\sigma_c}{\sigma_c}} \|\psi_x(t)\|_{L^2}$$
Dividing by (see \eqref{E:1-161})
\begin{equation}
\label{E:1-160}
M(\varphi_0)^{\frac{1-\sigma_c}{\sigma_c}} E(\varphi_0) = \| \varphi_0 \|_{L^2}^{\frac{2(1-\sigma_c)}{\sigma_c}} \| (\varphi_0)_x \|_{L^2}^2 \frac{p-3}{2(p+1)}
\end{equation}
we obtain
\begin{equation}
\label{E:1-155}
\frac{M(\psi)^{\frac{1-\sigma_c}{\sigma_c}} E(\psi)}{M(\varphi_0)^{\frac{1-\sigma_c}{\sigma_c}} E(\varphi_0)}  \geq f(\eta(t)) \defeq \frac{2(p+1)}{p-3} \left( \frac12 \eta(t)^2 - \frac{2}{p+1} \eta(t)^{\frac{p+1}{2}} \right)
\end{equation}
The function $f(\eta)$ has a maximum at $\eta=1$ with maximum value $f(1)=1$ (see Figure \ref{F:trapping}).  For each $y <1$, the equation $f(\eta)=y$ has two roots $\eta_-<1<\eta_+$.  Now take $\eta_-<1<\eta_+$ to be the two roots associated to $y=\frac{M(\psi)^{\frac{1-\sigma_c}{\sigma_c}} E(\psi)}{M(\varphi_0)^{\frac{1-\sigma_c}{\sigma_c}} E(\varphi_0)}$.  Since $\eta(t)$ is continuous, we obtain that either $\eta(t)\leq \eta_-$ for all $t$ (corresponding to case (1) in Theorem \ref{T:global-blowup}) or $\eta(t)\geq \eta_+$ for all $t$ (corresponding to case (2)).

We have established that in case (2), we have $\eta(t) \geq \eta_+>1$ on the whole maximal time interval $(T_-,T_+)$ of existence, and it remains to show that $|T_\pm|<\infty$, i.e. that $\psi(t)$ blows-up in finite negative and positive time.  In the local virial identity \eqref{E:local-virial}, we require $a(0)=a_x(0)=a_{xxx}(0)=0$.  If we in addition design $a(x)$ to satisfy $0\leq a(x) \leq C\epsilon^{-2}$, $a_{xx}(x) \leq 2$ and $|a_{xxxx}(x)|\leq C\epsilon^2$ for all $x\in \mathbb{R}$, and moreover $a_{xx}(0)=2$, then we have
$$\partial_t^2 \int a |\psi|^2 \leq 8 \|\psi_x\|_{L^2}^2 - 4|\psi(0)|^{p+1} + C\epsilon^2 M(\psi)$$
A weight function $a(x)$ meeting these conditions is given in the proof of Theorem \ref{T:global-blowup-critical}.  Multiply by $M(\psi)^{\frac{1-\sigma_c}{\sigma_c}}$ and divide by \eqref{E:1-160} to obtain
\begin{equation}
\label{E:1-162}
\begin{aligned}
\indentalign \frac{ M(\psi)^{\frac{1-\sigma_c}{\sigma_c}}}{ M(\varphi_0)^{\frac{1-\sigma_c}{\sigma_c}} E(\varphi_0)} \partial_t^2 \int a  |\psi|^2 \, dx \\
&\leq  4(p+1) \left( \frac{ M(\psi)^{\frac{1-\sigma_c}{\sigma_c}} E(\psi)}{ M(\varphi_0)^{\frac{1-\sigma_c}{\sigma_c}} E(\varphi_0)} - \eta(t)^2 \right) + \frac{C M(\psi)^{1/\sigma_c}}{M(\varphi_0)^{\frac{1-\sigma_c}{\sigma_c}}E(\varphi_0)} \epsilon^2
\end{aligned}
\end{equation}
Since $\frac{ M(\psi)^{\frac{1-\sigma_c}{\sigma_c}} E(\psi)}{ M(\varphi_0)^{\frac{1-\sigma_c}{\sigma_c}} E(\varphi_0)}<1$ and $\eta(t)\geq \eta_+>1$, we obtain that 
$$\delta \defeq - \frac{ M(\psi)^{\frac{1-\sigma_c}{\sigma_c}} E(\psi)}{ M(\varphi_0)^{\frac{1-\sigma_c}{\sigma_c}} E(\varphi_0)}+\eta_+^2>0$$
By \eqref{E:1-162}, it follows that
$$\partial_t^2 \int |x|^2 |\psi|^2 \, dx \leq -C_1\delta+ C_2\epsilon^2$$
for constants $C_j>0$.  Now take $\epsilon>0$ sufficiently small so that the right side is still bounded by a strictly negative number.  This forces $\int a |\psi|^2 \, dx$ to become zero in finite negative time $-\infty<\hat T_-<0$ and in finite positive time $0<\hat T_+<\infty$.  Since $a(x)\geq 0$, the maximal time interval of existence $(T_-,T_+)$ must be contained in $(\hat T_-,\hat T_+)$, and in particular $|T_\pm|<\infty$.  Note that since $a(x)$ is bounded, this argument does not require the assumption of finite variance ($\int x^2 |\psi|^2 \, dx <\infty$)

\begin{figure}
\includegraphics[scale=0.6]{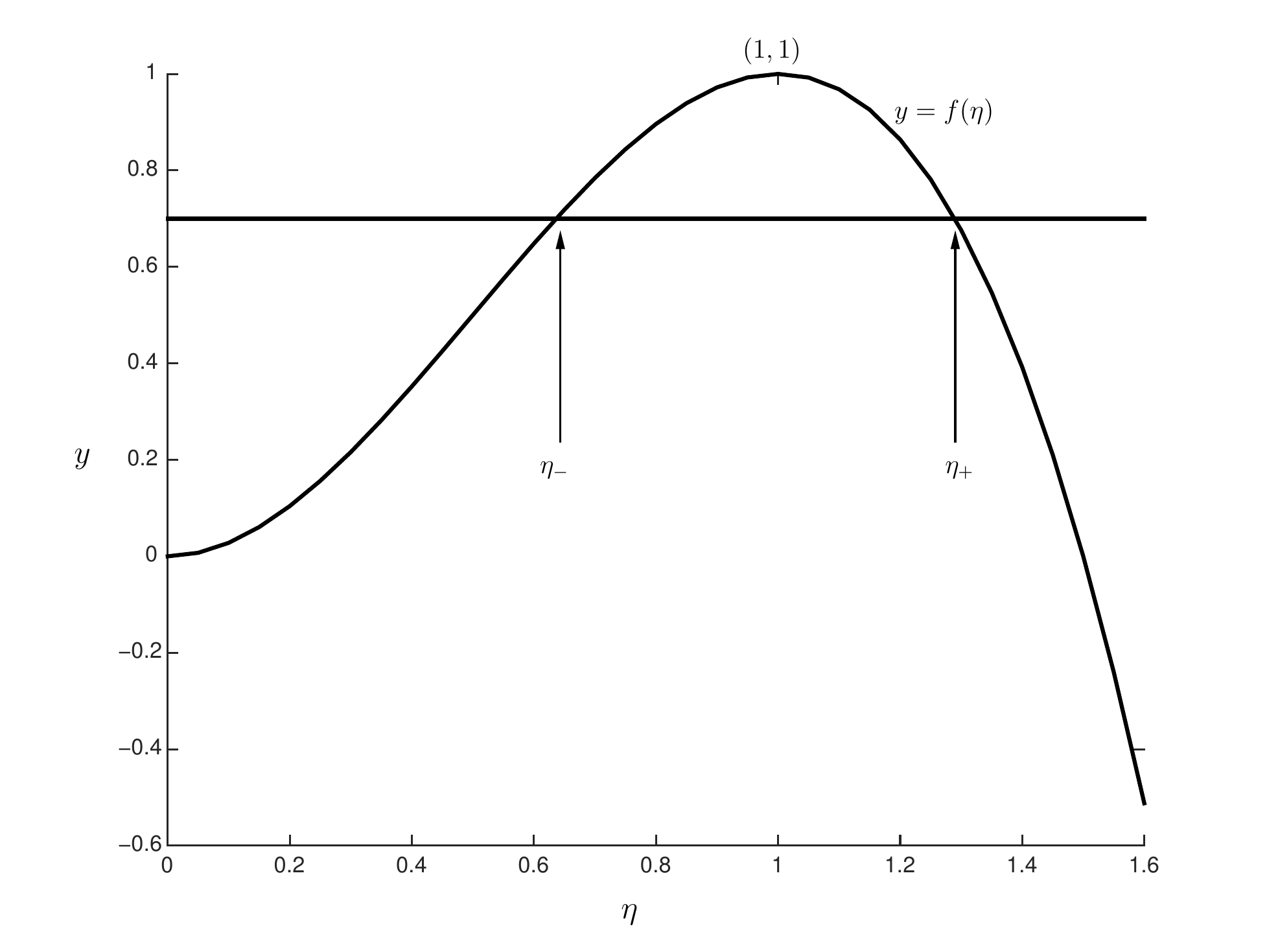}
\caption{\label{F:trapping} A plot of $y=f(\eta)$ versus $\eta$, where $f(\eta)$ is defined in \eqref{E:1-155}.  The function $f(\eta)$ has a maximum value of $1$ at $\eta=1$.  For each $0\leq y<1$, there are two roots $0\leq \eta_-<1<\eta_+$ of the equation $f(\eta)=y$, for each $y<0$, there is one root $\eta_+>1$.}
\end{figure}

\end{proof}

\begin{proof}[Proof of Theorem \ref{T:global-blowup-critical}]
Suppose that $M(\psi)<M(\varphi_0)=2$.  Note that
$$2E(\psi) = \|\psi'\|_{L^2}^2 - \tfrac12 |\psi(0)|^4 \geq \|\psi'\|_{L^2}^2 - \tfrac12 \|\psi\|_{L^2}^2 \|\psi ' \|_{L^2}^2 = \|\psi'\|_{L^2} ^2(1 - \tfrac12 M(\psi))$$
where in the last step we applied \eqref{E:1-146} from Prop. \ref{P:sharpGN}.

Now suppose that $E(\psi)<0$ (which can only happen if $M(\psi)>M(\varphi_0)=2$).  We apply the local virial identity \eqref{E:local-virial}.  In that equality, if $a(x)$ is chosen so that $a''(0)=2$ and $a''(x) \leq 2$ for all $x$, then
$$\partial_t^2 \int a(x) |\psi (x,t)|^2 \, dx \leq  16 E(\psi) - \int a''''(x) |\psi(x,t)|^2 \, dx$$
Let $b(x)$ be any smooth compactly supported function such that $b(x)=2$ for $0 \leq  x \leq 1$, $b(x) \leq 2$ for all $x\geq 1$, and  $\int_0^{+\infty} b(x) \, dx =0$.   Since $b$ is compactly supported and $\int_0^{+\infty} b(y) \, dy =0$, we have that the first integral $d(x) = \int_0^x b(y) \, dy$ is also compactly supported.  Finally, let $e(x) = \int_0^{|x|} d(y) \, dy$.  Then $e(x)$ is bounded, even, and $e(x) = x^2$ for $-1 \leq x \leq 1$.   Now set $a(x) = \epsilon^{-2} e(\epsilon x)$.   Then $a(x) = x^2$ for $-\epsilon^{-1} \leq x \leq \epsilon^{-1}$ so that $a(0)=0$, $a'(0)=0$, and $a''(0)=2$ and $a'''(0)=0$.  Moreover $a''(x) = e''(\epsilon x) = b(\epsilon x) \leq 2$.  Also note that $a''''(x) = \epsilon^2e''''(\epsilon x) = \epsilon^2 b''(\epsilon x) \leq C \epsilon^2$ (since $b$ is smooth, compactly supported).   Hence
$$ \partial_t^2 \int a(x) |\psi (x,t)|^2 \, dx \leq  16 E(\psi) + C\epsilon^2 M(\psi) $$
Take $\epsilon>0$ sufficiently small so that the right side is $<0$.  Then $\int a(x) |\psi(x,t)|^2 \, dx$ becomes negative at some finite time $T>0$, implying blow-up prior to $T$.  Note that since $a(x)$ is bounded, this does not require that $\psi$ belong to a weighted $L^2$ space.  
\end{proof}

\section{$L^2$ critical mass concentration and minimal mass blow-up}

\label{S:L2}

Before we proceed, let us remark that the following standard theorem in real analysis has an especially simple proof in 1D.

\begin{proposition}[Rellich-Kondrachov compactness]
\label{P:RK}
Suppose that $v_n\rightharpoonup V$ (weakly) in $H_x^1$.  Then $v_n\to V$ pointwise and for any $A>0$,
\begin{equation}
\label{E:1-145}
\int_{|x|\leq A} |v_n(x)|^2 \, dx\to \int_{|x|\leq A} |V(x)|^2 \, dx
\end{equation}
\end{proposition}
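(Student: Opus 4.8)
The plan is to use the special feature of one space dimension: $H^1_x$ embeds continuously into $C(\mathbb{R})$, so that point evaluation is a bounded linear functional on $H^1_x$. Granting this, both assertions follow quickly — pointwise convergence is immediate from the definition of weak convergence, and the integral convergence is an application of the dominated convergence theorem.

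First I would record that a weakly convergent sequence in a Hilbert space is norm-bounded: by the uniform boundedness principle there is $M$ with $\|v_n\|_{H^1} \le M$ for all $n$. Next, for each fixed $x_0 \in \mathbb{R}$, translating inequality \eqref{E:1-126} of Prop. \ref{P:sharpGN} gives $|u(x_0)|^2 \le \|u\|_{L^2}\|u'\|_{L^2} \le \|u\|_{H^1}^2$ for every $u \in H^1$, so $u \mapsto u(x_0)$ is a bounded linear functional on $H^1_x$. Since $v_n \rightharpoonup V$ in $H^1_x$, the definition of weak convergence (tested against this functional) yields $v_n(x_0) \to V(x_0)$; as $x_0$ was arbitrary, $v_n \to V$ pointwise on $\mathbb{R}$, with all functions identified with their continuous representatives.

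For \eqref{E:1-145} I would combine this pointwise convergence with a fixed integrable majorant. From the bound just used, $|v_n(x)|^2 \le \|v_n\|_{L^2}\|v_n'\|_{L^2} \le M^2$ uniformly in $x$ and $n$, and the constant $M^2$ is integrable over the finite-measure set $\{|x|\le A\}$. Since $|v_n(x)|^2 \to |V(x)|^2$ for every $x$, the dominated convergence theorem gives $\int_{|x|\le A}|v_n|^2\,dx \to \int_{|x|\le A}|V|^2\,dx$, which is precisely \eqref{E:1-145}.

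Honestly there is no real obstacle here — the only idea is the observation that the one-dimensional Sobolev embedding makes point evaluation continuous on $H^1$, after which everything is routine measure theory. This is exactly what fails in dimension $d \ge 2$, where one must instead run a genuine compactness argument (Arzel\`a--Ascoli after mollification, or Fr\'echet--Kolmogorov), and is the reason the proposition is flagged as being especially simple in 1D.
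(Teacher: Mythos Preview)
Your proof is correct. The second half (dominated convergence on $\{|x|\le A\}$ using the uniform $L^\infty$ bound from \eqref{E:1-126}) matches the paper's argument exactly. For the pointwise convergence, however, you take a slightly different and in fact cleaner route: you observe that point evaluation $u\mapsto u(x_0)$ is a bounded linear functional on $H^1$ and invoke the definition of weak convergence directly. The paper instead writes $v_n(0)V(0) = -\int_0^\infty (v_n V)' = -\int_0^\infty v_n' V - \int_0^\infty v_n V'$ via the fundamental theorem of calculus, uses weak $L^2$ convergence of $v_n$ and $v_n'$ against the fixed test functions $V$ and $V'$ to get $v_n(0)V(0)\to V(0)^2$, and then must handle the case $V(0)=0$ separately by shifting by a Gaussian. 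Your abstract observation bypasses that case split entirely; the paper's version is more hands-on but ends up slightly longer for it.
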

\begin{proof}
By translation, it suffices to show that $v_n(0) \to V(0)$.   By the fundamental theorem of calculus,
\begin{align*}
v_n(0)V(0) &= -\int_0^{+\infty} \frac{d}{dx} [v_n(x) V(x) ] \, dx \\
&= - \int_0^{+\infty} v_n'(x) V(x) \, dx - \int_0^{+\infty} v_n(x) V'(x) \, dx
\end{align*}
By the definition of weak convergence and the fundamental theorem of calculus again
$$v_n(0)V(0) \to - 2\int_0^{+\infty} V'(x)V(x) \, dx = V(0)^2$$
If $V(0) \neq 0$, then we conclude that $v_n(0) \to V(0)$.  If $V(0)=0$, then we can replace $v_n(x)$ by $v_n(x)+e^{-x^2}$ and $V(x)$ by $V(x) + e^{-x^2}$.    This completes the  proof of the pointwise convergence.  Since a weakly convergent sequence is bounded, we have by \eqref{E:1-126}
$$|v_n(x)| \leq \|v_n(x)\|_{L^2}^{1/2} \|v_n'(x)\|_{L^2}^{1/2}  \leq C$$
Thus \eqref{E:1-145} follows from the pointwise convergence and the dominated convergence theorem. 
\end{proof}

The following lemma is modeled on Hmidi \& Keraani \cite[Theorem 1.1]{hmidi2005blowup}, although the proof is much simpler since concentration compactness (profile decomposition) is not needed.

\begin{lemma}
\label{L:3}
Suppose that $\{v_{n}\}^{\infty}_{n=1} \subset H^{1}(\mathbb{R})$ is a bounded sequence such that
$$m \leq \lim_{n\to\infty} \left( \frac{|v_{n}(0)|}{|\varphi_{0}(0)|} \right)^{4} \,, \qquad 
\lim_{n\to \infty} \left( \frac{\|v_{n}'\|_{L^{2}}}{\|\varphi_{0}'\|_{L^{2}}} \right)^{2} \leq M \, . $$
Then there exists a subsequence (still labeled $v_{n}$) such that
$$v_{n} \rightharpoonup V \text{\ in\ } H^{1}$$
with 
\begin{equation}
\label{E:1-146}
\frac mM \leq \left( \frac{\|V\|_{L^{2}}}{\|\varphi_{0}\|_{L^{2}}} \right)^{2}
\end{equation}
\end{lemma}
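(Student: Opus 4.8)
The plan is to exploit the sharp Gagliardo--Nirenberg inequality \eqref{E:1-126} together with the scaling structure of the hypotheses. First I would extract a subsequence along which both limits in the hypotheses actually exist (replacing $\liminf$/$\limsup$ statements by genuine limits); since the sequence is bounded in $H^1$, I can further pass to a subsequence with $v_n \rightharpoonup V$ weakly in $H^1$. By Proposition \ref{P:RK} (Rellich--Kondrachov, proved above) we get pointwise convergence $v_n(x)\to V(x)$ for every $x$; in particular $v_n(0)\to V(0)$, so $|v_n(0)|\to|V(0)|$. Combined with the first hypothesis this gives
\[
m \le \left(\frac{|V(0)|}{|\varphi_0(0)|}\right)^4,
\]
i.e. a \emph{lower} bound on $|V(0)|$ in terms of $m$.

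Next I would apply \eqref{E:1-126} to $V$ itself: $|V(0)|^2 \le \|V\|_{L^2}\,\|V'\|_{L^2}$, which rearranges to
\[
\left(\frac{|V(0)|}{|\varphi_0(0)|}\right)^4 \le \left(\frac{\|V\|_{L^2}}{\|\varphi_0\|_{L^2}}\right)^2 \left(\frac{\|V'\|_{L^2}}{\|\varphi_0'\|_{L^2}}\right)^2,
\]
where I have used the normalization $|\varphi_0(0)|^4 = \|\varphi_0\|_{L^2}^2\|\varphi_0'\|_{L^2}^2$ (which follows from the Pohozhaev identities \eqref{E:1-125}ff., or equivalently from the equality case of \eqref{E:1-126}). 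For the second factor I invoke weak lower semicontinuity of the $H^1$-seminorm: $\|V'\|_{L^2}^2 \le \liminf_n \|v_n'\|_{L^2}^2 \le M\|\varphi_0'\|_{L^2}^2$. Chaining these three inequalities yields
\[
m \le \left(\frac{\|V\|_{L^2}}{\|\varphi_0\|_{L^2}}\right)^2 \cdot M,
\]
which is exactly \eqref{E:1-146} after dividing by $M$.

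The only genuinely delicate point is the case $V(0)=0$, which forces $m\le 0$ and makes \eqref{E:1-146} trivially true (the right side is $\ge 0$), so there is nothing to prove there; I would just note this at the outset. One should also double-check the edge case $M=0$: then $\|v_n'\|_{L^2}\to 0$, so $v_n(0)\to 0$ by \eqref{E:1-126}, forcing $V(0)=0$ and again $m\le 0$, and the statement $\frac mM\le(\cdots)^2$ is to be read as vacuous or as the limiting inequality $m\le 0\cdot(\cdots)$. Thus the main obstacle is essentially bookkeeping about degenerate cases rather than any real analytic difficulty; the substantive content is just the interplay of pointwise convergence at $0$ (Proposition \ref{P:RK}), weak lower semicontinuity of $\|\cdot'\|_{L^2}$, and the sharp inequality \eqref{E:1-126}, which replaces the profile-decomposition machinery of Hmidi--Keraani. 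I would organize the write-up so that the normalization $|\varphi_0(0)|^4=\|\varphi_0\|_{L^2}^2\|\varphi_0'\|_{L^2}^2$ is recorded explicitly before the chain of inequalities, since it is what makes all the ratios line up.
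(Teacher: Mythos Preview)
Your proposal is correct and follows essentially the same approach as the paper's proof: extract a weakly convergent subsequence, use Proposition~\ref{P:RK} to get $v_n(0)\to V(0)$, apply the sharp inequality \eqref{E:1-126} to $V$, and combine with weak lower semicontinuity of $\|V'\|_{L^2}$. Your treatment is in fact slightly more careful than the paper's, since you explicitly record the normalization $|\varphi_0(0)|^4=\|\varphi_0\|_{L^2}^2\|\varphi_0'\|_{L^2}^2$ and discuss the degenerate cases $V(0)=0$ and $M=0$, which the paper leaves implicit.
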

\begin{proof}
From the boundedness of $\{v_{n}\}_{n=1}^{\infty}$, we can pass to a subsequence (still labeled $v_{n}$) such that $v_{n} \rightharpoonup V$ for some $V \in H^{1}$. 
We have $v_{n}(0) \to V(0)$ by Prop. \ref{P:RK}. Hence,
$$m \leq \lim_{n\to \infty} \left( \frac{ |v_n(0)|}{|\varphi_0(0)|} \right)^4 = \left( \frac{|V(0)|}{|\varphi_0(0)|} \right)^4$$
and
$$ \left( \frac{\|V'\|_{L^2}}{\|\varphi_0'\|_{L^2}} \right)^2 \leq \lim_{n\to \infty} \left( \frac{\|v_n'\|_{L^2}}{\|\varphi_0'\|_{L^2}} \right)^2 \leq M$$
Applying \eqref{E:1-126} to $V$, we have
$$\left( \frac{|V(0)|}{|\varphi_{0}(0)|}\right)^{4} \leq \left(\frac{\|V\|_{L^{2}}}{\|\varphi_{0}\|_{L^{2}}} \right)^{2} \left(\frac{\|V'\|_{L^{2}}}{\|\varphi_{0}'\|_{L^{2}}} \right)^{2}$$
Combining the inequalities above, we get \eqref{E:1-146}.
\end{proof}

We can now apply the above to prove Theorem \ref{T:mass-conc}.
\begin{proof}[Proof of Theorem \ref{T:mass-conc}]
Let $\rho(t) = \| \varphi_0' \|_{L^2}/\|\psi'(t)\|_{L^2}$ and $v(x,t) = \rho(t)^{1/2}\psi(\rho(t)x,t)$ so that  $\|v'(t)\|_{L^2} = \|\varphi_0'\|_{L^2}$.  Moreover, 
$$E(v(t)) = \rho(t)^2 E(\psi(t)) = \rho(t)^2E(\psi_0) \to 0 \text{ as }t\nearrow T_*$$
Hence
$$\lim_{t\nearrow T_*} |v(0,t)|^4 = 2\lim_{t\nearrow T_*} \|v'(t)\|_{L^2}^2 = 2 \|\varphi_0'\|_{L^2}^2 = |\varphi_0(0)|^4$$
Let 
$$H(t) = \int_{|x|\leq \mu(t) \|\psi'(t)\|_{L^2}^{-1}} |\psi(x,t)|^2 \, dx = \int_{|x| \leq \mu(t)} |v(x,t)|^2 \, dx$$
Take $t_n\to T_*$ so that $\lim_{n\to \infty} H(t_n) = \liminf_{t\nearrow T_*} H(t)$.  Apply Lemma \ref{L:3}  to $v_n = v(t_n)$, with $m=M=1$, passing to a sequence obtain $v_n \rightharpoonup V$ such that $\|V\|_{L^2}/\|\varphi_0\|_{L^2} \geq 1$.  For each $A>0$, we have
$$\lim_{n\to \infty} H(t_n) \geq \lim_{n\to \infty} \int_{|x|\leq A} |v(x,t_n)|^2 \, dx = \int_{|x|\leq A} |V(x)|^2 \, dx$$
by Prop. \ref{P:RK}.  Since $A>0$ is arbitrary,
$$\lim_{n\to \infty} H(t_n) \geq  \int |V(x)|^2 \, dx \geq \|\varphi_0\|_{L^2}^2 $$
\end{proof}

Now we proceed to prove Theorem \ref{T:minimal}, stating that a minimal mass blow-up solution is necessarily a pseudoconformal transformation of the ground state.   The corresponding result for standard NLS \eqref{E:1-121} is due to Merle \cite{merle1993determination} and later simplified by Hmidi \& Keraani \cite[Theorem 2.4]{hmidi2005blowup}.  Our argument is modeled on Hmidi-Keraani \cite{hmidi2005blowup}.

\begin{proof}[Proof of Theorem \ref{T:minimal}]
Let $\alpha(s) \in C_c^\infty([0,+\infty))$ such that $0 \leq \alpha(s) \leq 1$ for all $s$, $\alpha(s)=1$ for $s\leq 1$ and $\alpha(s)=0$ for $s\geq 2$.  Let $\beta\in C_c^\infty(\mathbb{R})$ be given by $\beta(x) = |x|^2 \alpha(|x|^2)^2$.  A computation shows that there exists $C>0$ such that $|\beta'(x)|^2 \leq C\beta(x)$.  Let $\beta_p(x) = p^2\beta(x/p)$ and
$$g_p(t) = \int \beta_p(x) |\psi(x,t)|^2 \, dx$$
From \eqref{E:1dcc}, we compute
\begin{equation}
\label{E:1-19}
g_p'(t) = 4 \Im \int \beta_p'(x) \; \psi'(x,t) \, \overline{\psi(x,t)} \, dx
\end{equation}
By \eqref{E:1-126} in Prop. \ref{P:sharpGN}, if $\|v\|_{L^2}=\|\varphi_0\|_{L^2}$, then $E(v) \geq 0$.  Hence, for each $s\in \mathbb{R}$, $E(e^{is\beta_p}\psi) \geq 0$.  Since $E(e^{is\beta_p}\psi)$ is a quadratic polynomial in $s$,
$$E(e^{is\beta_p}\psi) = \frac12 s^2 \int (\beta_p')^2 |\psi|^2 \, dx + s \Im \int \beta_p' \, \psi' \, \bar \psi \, dx + E(\psi) $$
the discriminant is nonpositive, i.e.  
$$\left( \Im \int \beta_p' \, \psi' \, \bar \psi \, dx \right)^2 \leq 2 E(\psi) \int (\beta_p')^2 |\psi|^2 \, dx $$
Plugging in \eqref{E:1-19} and using that $(\beta_p')^2 \leq C \beta_p$ we get
$$|g_p'(t)| \leq C(\psi_0) g_p(t)^{1/2}$$
Integrating from time $t$ to time $T$, we obtain
\begin{equation}
\label{E:1-20}
|g_p(T)^{1/2} - g_p(t)^{1/2}| \leq \frac12 C(\psi_0)(T-t)
\end{equation}
We would like to take $T=T_*$ with $g_p(T_*)=0$ in this inequality to conlude $g_p(t) \leq \tilde C(\psi_0) (T_*-t)^2$ for all $0\leq t <T_*$, but instead we must take $T\nearrow T_*$ in an appropriate limiting sense.  Let $t_n \nearrow T_*$ be any sequence of times approaching $T_*$.  By Theorem \ref{T:mass-conc} and the fact that $\|\psi(t)\|_{L^2} = \|\varphi_0\|_{L^2}$, it follows that for each $\epsilon>0$,
$$ \lim_{n\to \infty} \int_{|x|\leq \epsilon} |\psi(x,t_n)|^2 \, dx = \|\varphi_0\|_{L^2}^2$$
Hence for every $\epsilon>0$,
\begin{equation}
\label{E:1-150}
\lim_{n\to \infty} \int_{|x|> \epsilon} |\psi(x,t_n)|^2 \,dx =0
\end{equation}
Since $\beta_p(x) = |x|^2$ for $|x|\leq p$ and $\beta_p(x) \leq 4p^2$ for all $x\in \mathbb{R}$, if $0<\epsilon<p$, we have
$$g_p(t_n) \leq 4p^2 \int_{|x|>\epsilon} |\psi(x,t_n)|^2 \, dx + \epsilon^2 \int_{|x|<\epsilon} |\psi(x,t_n)|^2 \, dx$$
Sending $n \to \infty$, by \eqref{E:1-150} we have
$$\lim_{n\to \infty} g_p(t_n) \leq \epsilon^2 \| \varphi_0\|_{L^2}^2$$
Since $\epsilon>0$ is arbitary, we have (for fixed $p>0$) that $\lim_{n\to \infty} g_p(t_n) =0$.  Plugging into \eqref{E:1-20} with $T=t_{n}$ and sending $n\to\infty$, we obtain
$$g_{p}(t) \le C(\psi_{0})^{2} (T_{*} - t)^{2}$$
Sending $p\to\infty$, we conclude that $\psi\in \Sigma$ and
\begin{equation}
\label{E:1-21}
g(t) \leq C(\psi_{0})^{2} (T_{*}-t)^{2}
\end{equation}
where
$$g(t) \defeq \int |x|^{2} |\psi(x,t)|^{2}\, dx$$
Here, $\Sigma$ is the space of functions for which the norm $\| \psi \|_{\Sigma} \defeq ( \|\psi \|_{H_x^1} + \|x\psi\|_{L^2})^{1/2}$ is finite, i.e. $H_x^1$ functions of finite variance.

The virial identity is $$g''(t) = 16 E(\psi_{0})$$
Integrating twice,
$$g(t) = g(0) + tg'(0) + 8t^{2}E(\psi_{0}) = 8t^{2} E(e^{i|x|^{2}/(4t)}\psi_{0})$$
Plugging into \eqref{E:1-21},
$$ 8t^{2} E(e^{i|x|^{2}/(4t)}\psi_{0}) \leq C(\psi_{0})^{2}(T_{*}-t)^{2} $$
Sending $t \nearrow T_{*}$, we obtain $E(e^{i|x|^{2}/(4t)}\psi_{0})=0$.  Hence
$$ 2\| (e^{i|x|^2/4T_*} \psi_0(x))' \|_{L^2}^2 = \left| (e^{i|x|^2/4T_*} \psi_0(x))\Big|_{x=0} \right|^4$$
Since $\| e^{i|x|^2/4T_*} \psi_0(x) \|_{L^2}^2 = \|\varphi_0\|_{L^2} = 2$, this implies that $e^{i|x|^2/4T_*} \psi_0(x)$ gives equality in \eqref{E:1-126}.  Prop. \ref{P:sharpGN} then implies that there exists $\theta\in \mathbb{R}$, $\alpha>0$ and $\beta>0$ such that
$$e^{i|x|^2/4T_*}\psi_0(x) = e^{i\theta} \alpha \varphi_0(\beta x)$$
Taking the $L^2$ norm of this equation and using that $\|\psi_0\|_{L^2} = \|\varphi_0\|_{L^2}$ gives that $\alpha = \beta^{1/2}$.  Hence \eqref{E:1-147} holds.

\end{proof}

\section{$L^2$ critical near minimal mass blow-up}
\label{S:near-minimal}

In this section, we state and prove the supporting lemmas for Theorem \ref{T:near-minimal}, which states that near minimal mass blow-up solutions are close to modulations of the ground state $\varphi_0$.

\begin{lemma}
\label{L:2}
For each $\epsilon>0$ there exists $\delta>0$ such that the following holds.
If $v\in H^1$ is such that 
$$| \|v'\|_{L^2} - \|\varphi_0'\|_{L^2} | \leq \delta$$
$$| \|v\|_{L^2} - \|\varphi_0\|_{L^2} |\leq \delta$$
$$||v(0)| - |\varphi_0(0)| | \leq \delta$$
then there exists $\theta\in \mathbb{R}$ such that 
$$\| v(x) - e^{i\theta} \varphi_0(x) \|_{H^1} \leq \epsilon$$
\end{lemma}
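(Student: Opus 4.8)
The plan is to argue by contradiction, using weak compactness in $H^1$ together with the rigidity of the equality case in Prop.~\ref{P:sharpGN}. Suppose the lemma fails: then there is $\epsilon_0>0$ and a sequence $v_n\in H^1$ with $\|v_n'\|_{L^2}\to\|\varphi_0'\|_{L^2}$, $\|v_n\|_{L^2}\to\|\varphi_0\|_{L^2}$, and $|v_n(0)|\to|\varphi_0(0)|$, yet $\inf_{\theta\in\mathbb{R}}\|v_n-e^{i\theta}\varphi_0\|_{H^1}>\epsilon_0$ for every $n$. The sequence $\{v_n\}$ is bounded in $H^1$, so after passing to a subsequence (not relabeled) we may assume $v_n\rightharpoonup V$ weakly in $H^1$ for some $V\in H^1$.

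The first step is to identify the weak limit. By Prop.~\ref{P:RK} we have $v_n(0)\to V(0)$, hence $|V(0)|=|\varphi_0(0)|$, and by weak lower semicontinuity of the norms, $\|V'\|_{L^2}\le\|\varphi_0'\|_{L^2}$ and $\|V\|_{L^2}\le\|\varphi_0\|_{L^2}$. Applying the sharp Gagliardo--Nirenberg inequality \eqref{E:1-126} to $V$ gives
$$|\varphi_0(0)|^2=|V(0)|^2\le\|V\|_{L^2}\|V'\|_{L^2}\le\|\varphi_0\|_{L^2}\|\varphi_0'\|_{L^2}=|\varphi_0(0)|^2,$$
so equality holds throughout: $V$ saturates \eqref{E:1-126}, and in addition $\|V\|_{L^2}=\|\varphi_0\|_{L^2}$ and $\|V'\|_{L^2}=\|\varphi_0'\|_{L^2}$. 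By the equality characterization in Prop.~\ref{P:sharpGN}, $V(x)=e^{i\theta}\alpha\varphi_0(\beta x)$ for some $\theta\in\mathbb{R}$, $\alpha>0$, $\beta>0$. The identity $\|V\|_{L^2}=\|\varphi_0\|_{L^2}$ forces $\alpha^2\beta^{-1}=1$ and $\|V'\|_{L^2}=\|\varphi_0'\|_{L^2}$ forces $\alpha^2\beta=1$; together these give $\beta=1$ and $\alpha=1$, so $V=e^{i\theta}\varphi_0$.

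The final step is to upgrade weak to strong convergence. Since $\|v_n\|_{H^1}^2=\|v_n\|_{L^2}^2+\|v_n'\|_{L^2}^2\to\|\varphi_0\|_{L^2}^2+\|\varphi_0'\|_{L^2}^2=\|V\|_{H^1}^2$ and $v_n\rightharpoonup V$ in the Hilbert space $H^1$, the elementary identity $\|v_n-V\|_{H^1}^2=\|v_n\|_{H^1}^2-2\operatorname{Re}\langle v_n,V\rangle_{H^1}+\|V\|_{H^1}^2\to 0$ shows $v_n\to V=e^{i\theta}\varphi_0$ strongly in $H^1$. For $n$ large this contradicts $\|v_n-e^{i\theta}\varphi_0\|_{H^1}>\epsilon_0$, completing the proof.

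The only point requiring care is the scaling parameter $\beta$: a priori the weak limit $V$ could be a rescaled ground state, but because the hypotheses pin down \emph{both} the $L^2$ norm and the $\dot H^1$ norm of $v_n$, hence of $V$, the scaling is forced to be trivial. Everything else is routine: weak lower semicontinuity of norms, Prop.~\ref{P:RK}, and the Hilbert-space fact that weak convergence together with convergence of norms implies strong convergence.
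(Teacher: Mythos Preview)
Your proof is correct and follows essentially the same approach as the paper's: pass to a weakly convergent subsequence, use Prop.~\ref{P:RK} to get convergence at $0$, combine weak lower semicontinuity with the sharp Gagliardo--Nirenberg inequality to identify the limit as $e^{i\theta}\varphi_0$ (with the two norm constraints pinning $\alpha=\beta=1$), and then upgrade to strong convergence via convergence of norms. The only cosmetic differences are that the paper phrases the argument as a direct sequential statement rather than by contradiction, and invokes the functional $I(u)=\|u\|_{L^2}^2\|u'\|_{L^2}^2/|u(0)|^4$ instead of writing out the chain of inequalities explicitly; neither affects the substance.
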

\begin{proof}
The assertion is equivalent to the following statement about sequences:  Suppose that $v_n$ is a sequence in $H^1$ such that $\|v_n'\|_{L^2} \to \|\varphi_0'\|_{L^2}$, $\|v_n\|_{L^2}\to \|\varphi_0\|_{L^2}$, $|v_n(0)| \to |\varphi_0(0)|$.  Then there exists $\theta\in \mathbb{R}$ and a subsequence (still labeled $v_n$) such that $v_n \to e^{i\theta}\varphi_0$ in $H^1$.  To prove this, pass to a subsequence such that $v_n \rightharpoonup \psi$ for some $\psi\in H^1$.  We will show that there exists $\theta\in \mathbb{R}$ such that $\psi = e^{i\theta}\varphi_0$.  By Prop. \ref{P:RK}, $v_n(0)\to \psi(0)$.  Furthermore, $v_n \rightharpoonup \psi$ implies $\|\psi\|_{L^2} \leq \lim_{n\to \infty} \|v_n\|_{L^2}$ and $\|\psi'\|_{L^2} \leq \lim_{n\to \infty} \|v_n'\|_{L^2}$. Hence
$$I(\psi) \leq \lim_{n\to \infty} I(v_n) = I(\varphi_0)$$
Since $\varphi_0$ is a minimizer of $I$, $\psi$ is also a minimizer, $I(\psi) = I(\varphi_0)$, and hence $\|\psi\|_{L^2} = \lim_{n\to \infty} \|v_n\| = \|\varphi_0\|_{L^2}$ and $\|\psi'\|_{L^2} = \lim_{n\to \infty} \|v_n'\|_{L^2}= \|\varphi_0'\|_{L^2}$.  This together with the fact that $v_n \rightharpoonup \psi$ implies that $v_n \to \psi$ (strongly)  in $H^1$.  

By the uniqueness in Prop. \ref{P:sharpGN}, there exists $\theta\in \mathbb{R}$, $\alpha>0$, and $\beta>0$ such that $\psi(x) = e^{i\theta} \alpha \varphi_0(\beta x)$.  Since $\|\psi\|_{L^2} = \|\varphi_0\|_{L^2}$ and $\|\psi'\|_{L^2} = \|\varphi_0'\|_{L^2}$ it follows that $\alpha=1$ and $\beta=1$.  

\end{proof}

\begin{corollary}
\label{C:near-minimal}
For each $\epsilon>0$, there exists $\delta>0$ such that the following holds.  If $v\in H^1$ is such that $| \|v\|_{L^2} - \|\varphi_0\|_{L^2} | \leq \delta$ and $|E(v)| \leq \rho^{-2}\delta$, where $\rho = \frac{\|\varphi_0'\|_{L^2}}{\|v'\|_{L^2}}$, then there exists $\theta\in \mathbb{R}$ such that $\|e^{-i\theta} \rho^{1/2} v(\rho x) - \varphi_0(x) \|_{H^1} \leq \epsilon$.
\end{corollary}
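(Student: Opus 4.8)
The plan is to reduce the statement to Lemma \ref{L:2} by exploiting the scaling symmetry of \eqref{E:1dcc}. Given $v$ as in the hypothesis, I first observe that $\|v'\|_{L^2}\neq 0$: a function in $H^1(\mathbb{R})$ with $v'=0$ is constant, hence identically $0$, which would force $\|v\|_{L^2}=0$ and contradict $\bigl|\,\|v\|_{L^2}-\|\varphi_0\|_{L^2}\,\bigr|\leq\delta$ as soon as $\delta<\|\varphi_0\|_{L^2}$. Thus $\rho=\|\varphi_0'\|_{L^2}/\|v'\|_{L^2}$ is well defined, and I set $w(x)=\rho^{1/2}v(\rho x)$. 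By direct computation $\|w'\|_{L^2}=\|\varphi_0'\|_{L^2}$ (this is the point of the choice of $\rho$), $\|w\|_{L^2}=\|v\|_{L^2}$, and $E(w)=\rho^{2}E(v)$, so the hypothesis $|E(v)|\leq\rho^{-2}\delta$ becomes $|E(w)|\leq\delta$. Moreover the conclusion to be proved is exactly $\|e^{-i\theta}w-\varphi_0\|_{H^1}=\|w-e^{i\theta}\varphi_0\|_{H^1}\leq\epsilon$, since multiplication by $e^{i\theta}$ inside the norm is an isometry.

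The only step requiring a small computation is to convert smallness of $E(w)$ into smallness of $\bigl|\,|w(0)|-|\varphi_0(0)|\,\bigr|$. For $p=3$ the energy reads $E(w)=\tfrac12\|w'\|_{L^2}^2-\tfrac14|w(0)|^4$, and here $\|w'\|_{L^2}^2=\|\varphi_0'\|_{L^2}^2$ is frozen; since the Pohozhaev identities for $\varphi_0$ (direct computation from \eqref{E:1-125}) give $2\|\varphi_0'\|_{L^2}^2=|\varphi_0(0)|^4$, we obtain $|w(0)|^4=2\|w'\|_{L^2}^2-4E(w)=|\varphi_0(0)|^4-4E(w)$, hence $\bigl|\,|w(0)|^4-|\varphi_0(0)|^4\,\bigr|\leq 4\delta$. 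For $\delta$ small this keeps $|w(0)|^4$ bounded below by a positive constant, so the fourth-root map is Lipschitz on that range and $\bigl|\,|w(0)|-|\varphi_0(0)|\,\bigr|\leq C\delta$ for an absolute constant $C$ depending only on $\varphi_0$.

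Thus $w$ satisfies all three hypotheses of Lemma \ref{L:2} with $\delta$ replaced by $\delta':=\max(1,C)\,\delta$: the derivative condition holds with value $0$, the mass condition with value $\leq\delta$, and the pointwise condition with value $\leq C\delta$. Given $\epsilon>0$, I take $\delta_0$ to be the threshold furnished by Lemma \ref{L:2} for this $\epsilon$, and then choose $\delta$ so small that $\max(1,C)\delta\leq\delta_0$ (and $\delta$ is small enough that the Lipschitz estimate above applies). Lemma \ref{L:2} then produces $\theta\in\mathbb{R}$ with $\|w-e^{i\theta}\varphi_0\|_{H^1}\leq\epsilon$, which is the assertion. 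I do not expect a genuine obstacle: the corollary is essentially a repackaging of Lemma \ref{L:2} via scale-invariance, and the only place demanding care is the energy-to-$|w(0)|$ step, which works precisely because the choice of $\rho$ normalizes $\|w'\|_{L^2}$ exactly to $\|\varphi_0'\|_{L^2}$, turning the single scalar bound $|E(w)|\leq\delta$ into control of $|w(0)|$.
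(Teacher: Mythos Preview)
Your proof is correct and follows essentially the same route as the paper: rescale by $\rho$ to normalize $\|w'\|_{L^2}=\|\varphi_0'\|_{L^2}$, use the energy identity together with the Pohozhaev relation $2\|\varphi_0'\|_{L^2}^2=|\varphi_0(0)|^4$ to obtain $\bigl|\,|w(0)|^4-|\varphi_0(0)|^4\,\bigr|\leq 4\delta$, and then invoke Lemma~\ref{L:2}. The only additions you make beyond the paper's argument are the harmless sanity checks (that $\|v'\|_{L^2}\neq 0$ and the Lipschitz passage from fourth powers to first powers), which are fine.
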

\begin{proof}
Let $\tilde v(x) = \rho^{1/2} v(\rho x)$.  We will apply Lemma \ref{L:2} to $\tilde v$.  First,  the hypothesis $| \|v\|_{L^2} - \|\varphi_0\|_{L^2} | \leq \delta$ implies $| \|\tilde v\|_{L^2} - \|\varphi_0\|_{L^2} | \leq \delta$.  Second, $\|\tilde v'\|_{L^2} = \rho \|v'\|_{L^2} = \|\varphi_0'\|_{L^2}$.  Finally, the fact that $| |\tilde v(0)| - |\varphi_0(0)| | \leq \tilde \delta$ follow from:
\begin{align*}
| |\tilde v(0)|^4 - |\varphi_0(0)|^4 | &= | 2 \rho^2 \|v'\|_{L^2}^2 - 4\rho^{2} E(v) - |\varphi_0(0)|^4| \\
&= | 2\|\varphi_0'\|_{L^2}^2 - |\varphi_0(0)|^4 - 4\rho^2 E(v)|\\
&= 4\rho^2 |E(v)| \\
&\leq 4\delta 
\end{align*}
\end{proof}

Theorem \ref{T:near-minimal} follows from Corollary \ref{C:near-minimal} by taking $v = \psi(t)$ for each $t$ for which $|E(\psi)| \leq \rho(t)^{-2} \delta$.  This inequality is valid for some interval $(T_*-\delta,T_*)$, since $\rho(t) \to 0$ as $t \nearrow T_*$.

\def\cprime{$'$} \def\cprime{$'$}
\providecommand{\bysame}{\leavevmode\hbox to3em{\hrulefill}\thinspace}
\providecommand{\MR}{\relax\ifhmode\unskip\space\fi MR }
% \MRhref is called by the amsart/book/proc definition of \MR.
\providecommand{\MRhref}[2]{%
  \href{http://www.ams.org/mathscinet-getitem?mr=#1}{#2}
}
\providecommand{\href}[2]{#2}


\begin{thebibliography}{HTMG94}

\bibitem[Ada02]{MR1946010}
Riccardo Adami, \emph{Blow-up for {S}chr\"odinger equation with pointwise
  nonlinearity}, Mathematical results in quantum mechanics ({T}axco, 2001),
  Contemp. Math., vol. 307, Amer. Math. Soc., Providence, RI, 2002, pp.~1--7.
  \MR{1946010 (2003k:35224)}

\bibitem[ADFT03]{MR1972871}
Riccardo Adami, Gianfausto Dell'Antonio, Rodolfo Figari, and Alessandro Teta,
  \emph{The {C}auchy problem for the {S}chr\"odinger equation in dimension
  three with concentrated nonlinearity}, Ann. Inst. H. Poincar\'e Anal. Non
  Lin\'eaire \textbf{20} (2003), no.~3, 477--500. \MR{1972871 (2004b:35296)}

\bibitem[ADFT04]{MR2037249}
\bysame, \emph{Blow-up solutions for the {S}chr\"odinger equation in dimension
  three with a concentrated nonlinearity}, Ann. Inst. H. Poincar\'e Anal. Non
  Lin\'eaire \textbf{21} (2004), no.~1, 121--137. \MR{2037249 (2004k:35305)}

\bibitem[AT01]{MR1814425}
Riccardo Adami and Alessandro Teta, \emph{A class of nonlinear {S}chr\"odinger
  equations with concentrated nonlinearity}, J. Funct. Anal. \textbf{180}
  (2001), no.~1, 148--175. \MR{1814425 (2002b:35186)}

\bibitem[BO15]{Batal}
Ahmet Batal and T\"urker \"Ozsari, \emph{Nonlinear {S}chr\"odinger equation on
  the half-line with nonlinear boundary condition}, \texttt{arXiv:1507.04666
  [math.AP]} (2015).


\bibitem[BL83]{MR695535}
H.~Berestycki and P.-L. Lions, \emph{Nonlinear scalar field equations. {I}.
  {E}xistence of a ground state}, Arch. Rational Mech. Anal. \textbf{82}
  (1983), no.~4, 313--345. \MR{695535 (84h:35054a)}


\bibitem[BSZ15]{BonaSunZhang}
Jerry~L. Bona, Shu-Ming Sun, and Bing-Yu Zhang, \emph{Nonhomogeneous
  boundary-value problems for one-dimensional nonlinear {S}chr\"odinger
  equations}, \texttt{arXiv:1503.00065 [math.AP]} (2015).

\bibitem[BW97]{MR1655515}
Jean Bourgain and Wen-Sheng Wang, \emph{Construction of blowup solutions for the
  nonlinear {S}chr\"odinger equation with critical nonlinearity}, Ann. Scuola
  Norm. Sup. Pisa Cl. Sci. (4) \textbf{25} (1997), no.~1-2, 197--215 (1998),
  Dedicated to Ennio De Giorgi. \MR{1655515 (99m:35219)}

\bibitem[BCR99]{MR1699715} Chris J. Budd, Shaohua Chen, and Robert D. Russell, \emph{New self-similar solutions of the nonlinear Schr\"odinger equation with moving mesh computations}, J. Comput. Phys. 152 (1999), no. 2, 756--789. 

\bibitem[CW90]{MR1055532}
 Thierry Cazenave and Fred B. Weissler, \emph{The Cauchy problem for the critical nonlinear Schr\"odinger equation in $H^s$}, Nonlinear Anal. \textbf{14} (1990), no. 10, pp. 807--836. 

\bibitem[DHR08]{MR2470397}
Thomas Duyckaerts, Justin Holmer, and Svetlana Roudenko, \emph{Scattering for
  the non-radial 3{D} cubic nonlinear {S}chr\"odinger equation}, Math. Res.
  Lett. \textbf{15} (2008), no.~6, 1233--1250. \MR{2470397 (2010e:35257)}

\bibitem[ET15]{ET15}
M. Burak Erdogan, Nikolaos Tzirakis, \emph{Regularity properties of the cubic nonlinear Schr\"odinger equation on the half line}, arxiv.org preprint \texttt{arXiv:1509.03546}.  

\bibitem[FXC11]{MR2838120}
DaoYuan Fang, Jian Xie, and Thierry Cazenave, \emph{Scattering for the focusing
  energy-subcritical nonlinear {S}chr\"odinger equation}, Sci. China Math.
  \textbf{54} (2011), no.~10, 2037--2062. \MR{2838120}

\bibitem[Fib15]{MR3308230}
Gadi Fibich, \emph{The nonlinear {S}chr\"odinger equation}, Applied
  Mathematical Sciences, vol. 192, Springer, Cham, 2015, Singular solutions and
  optical collapse. \MR{3308230}

\bibitem[Fra85]{MR807329}
G.~M. Fra{\u\i}man, \emph{Asymptotic stability of manifold of self-similar
  solutions in self-focusing}, Zh. \`Eksper. Teoret. Fiz. \textbf{88} (1985),
  no.~2, 390--400. \MR{807329 (86m:78002)}

\bibitem[GV79]{MR0533218} 
J. Ginibre and G. Velo, \emph{On a class of nonlinear Schr\"odinger equations. I. The Cauchy problem, general case}, J. Funct. Anal. \textbf{32} (1979), no. 1, pp. 1--32.

\bibitem[Gla77]{MR0460850}
R.~T. Glassey, \emph{On the blowing up of solutions to the {C}auchy problem for
  nonlinear {S}chr\"odinger equations}, J. Math. Phys. \textbf{18} (1977),
  no.~9, 1794--1797. \MR{0460850 (57 \#842)}

\bibitem[Gue14]{MR3266698}
Cristi~Darley Guevara, \emph{Global behavior of finite energy solutions to the
  {$d$}-dimensional focusing nonlinear {S}chr\"odinger equation}, Appl. Math.
  Res. Express. AMRX (2014), no.~2, 177--243. \MR{3266698}

\bibitem[HTMG94]{Hennig}
D.~Hennig, G.P Tsironis, M.I. Molina, and H.~Gabriel, \emph{A nonlinear
  quasiperiodic {K}ronig-{P}enney model}, Physics Letters A \textbf{190}
  (1994), 259--263.

\bibitem[HK05]{hmidi2005blowup}
Taoufik Hmidi and Sahbi Keraani, \emph{Blowup theory for the critical nonlinear
  schr{\"o}dinger equations revisited}, International Mathematics Research
  Notices \textbf{2005} (2005), no.~46, 2815--2828.

\bibitem[Hol05]{holmer2005initial}
Justin Holmer, \emph{The initial-boundary-value problem for the 1d nonlinear
  schr{\"o}dinger equation on the half-line}, Differential and Integral
  equations \textbf{18} (2005), no.~6, 647--668.

\bibitem[Ka87]{MR0877998} Tosio Kato, \emph{On nonlinear Schr\"odinger equations}, Ann. Inst. H. Poincar\'e Phys. Th\'eor. \textbf{46} (1987), no. 1, pp. 113--129.

\bibitem[KL95]{MR1349311}
Nancy Kopell and Michael Landman, \emph{Spatial structure of the focusing
  singularity of the nonlinear {S}chr\"odinger equation: a geometrical
  analysis}, SIAM J. Appl. Math. \textbf{55} (1995), no.~5, 1297--1323.
  \MR{1349311 (96g:35176)}


\bibitem[Kwo89]{MR969899}
Man~Kam Kwong, \emph{Uniqueness of positive solutions of {$\Delta u-u+u^p=0$}
  in {${\bf R}^n$}}, Arch. Rational Mech. Anal. \textbf{105} (1989), no.~3,
  243--266. \MR{969899 (90d:35015)}

\bibitem[LPSS88]{MR966356}
M.~J. Landman, G.~C. Papanicolaou, C.~Sulem, and P.-L. Sulem, \emph{Rate of
  blowup for solutions of the nonlinear {S}chr\"odinger equation at critical
  dimension}, Phys. Rev. A (3) \textbf{38} (1988), no.~8, 3837--3843.
  \MR{966356 (89k:35218)}

\bibitem[MB01]{MolinaBustamante}
M.~I. Molina and C.~A. Bustamante, \emph{The attractive nonlinear
  delta-function potential}, \texttt{arXiv:physics/0102053 [physics.ed-ph]}
  (2001).

\bibitem[Mer93]{merle1993determination}
Frank Merle, \emph{Determination of blow-up solutions with minimal mass for
  nonlinear Schr{\"o}dinger equations with critical power}, Duke Mathematical
  Journal \textbf{69} (1993), no.~2, 427--454.

\bibitem[MR03]{MR1995801}
F.~Merle and P.~Raphael, \emph{Sharp upper bound on the blow-up rate for the
  critical nonlinear {S}chr\"odinger equation}, Geom. Funct. Anal. \textbf{13}
  (2003), no.~3, 591--642. \MR{1995801 (2005j:35207)}

\bibitem[MR04]{MR2061329}
Frank Merle and Pierre Raphael, \emph{On universality of blow-up profile for
  {$L^2$} critical nonlinear {S}chr\"odinger equation}, Invent. Math.
  \textbf{156} (2004), no.~3, 565--672. \MR{2061329 (2006a:35283)}

\bibitem[MR05a]{MR2150386}
\bysame, \emph{The blow-up dynamic and upper bound on the blow-up rate for
  critical nonlinear {S}chr\"odinger equation}, Ann. of Math. (2) \textbf{161}
  (2005), no.~1, 157--222. \MR{2150386 (2006k:35277)}

\bibitem[MR05b]{MR2116733}
\bysame, \emph{Profiles and quantization of the blow up mass for critical
  nonlinear {S}chr\"odinger equation}, Comm. Math. Phys. \textbf{253} (2005),
  no.~3, 675--704. \MR{2116733 (2006m:35346)}

\bibitem[MR06]{MR2169042}
\bysame, \emph{On a sharp lower bound on the blow-up rate for the {$L^2$}
  critical nonlinear {S}chr\"odinger equation}, J. Amer. Math. Soc. \textbf{19}
  (2006), no.~1, 37--90 (electronic). \MR{2169042 (2006j:35223)}

\bibitem[MRS10]{MR2729284}
Frank Merle, Pierre Rapha{\"e}l, and Jeremie Szeftel, \emph{Stable self-similar
  blow-up dynamics for slightly {$L^2$} super-critical {NLS} equations}, Geom.
  Funct. Anal. \textbf{20} (2010), no.~4, 1028--1071. \MR{2729284
  (2011m:35294)}

\bibitem[MT90]{MR1047566}
Frank Merle and Yoshio Tsutsumi, \emph{{$L^2$} concentration of blow-up
  solutions for the nonlinear {S}chr\"odinger equation with critical power
  nonlinearity}, J. Differential Equations \textbf{84} (1990), no.~2, 205--214.
  \MR{1047566 (91e:35194)}

\bibitem[Per01]{perelman2001formation}
Galina Perelman, \emph{On the formation of singularities in solutions of the
  critical nonlinear Schr{\"o}dinger equation}, Annales Henri Poincar{\'e},
  vol.~2, Springer, 2001, pp.~605--673.

\bibitem[Rap05]{MR2122541}
Pierre Raphael, \emph{Stability of the log-log bound for blow up solutions to the critical non linear {S}chr\"odinger equation}, Math. Ann. \textbf{331} (2005), no.~3, 577--609. \MR{2122541 (2006b:35303)}

\bibitem[RK03]{MR1975789}  Vivi Rottsch\"afer and Tasso J. Kaper, \emph{Geometric theory for multi-bump, self-similar, blowup solutions of the cubic nonlinear Schr\"odinger equation}, Nonlinearity 16 (2003), no. 3, 929--961. 

\bibitem[SS99]{MR1696311}
Catherine Sulem and Pierre-Louis Sulem, \emph{The nonlinear {S}chr\"odinger
  equation}, Applied Mathematical Sciences, vol. 139, Springer-Verlag, New
  York, 1999, Self-focusing and wave collapse. \MR{1696311 (2000f:35139)}

\bibitem[Tao06]{MR2233925}
Terence Tao, \emph{Nonlinear dispersive equations}, CBMS Regional Conference
  Series in Mathematics, vol. 106, Published for the Conference Board of the
  Mathematical Sciences, Washington, DC; by the American Mathematical Society,
  Providence, RI, 2006, Local and global analysis. \MR{2233925 (2008i:35211)}

\bibitem[Tsu90]{MR1074950}
Yoshio Tsutsumi, \emph{Rate of {$L^2$} concentration of blow-up solutions for
  the nonlinear {S}chr\"odinger equation with critical power}, Nonlinear Anal.
  \textbf{15} (1990), no.~8, 719--724. \MR{1074950 (91j:35043)}

\bibitem[Wei86b]{MR829596}
Michael~I. Weinstein, \emph{On the structure and formation of singularities in solutions to
  nonlinear dispersive evolution equations}, Comm. Partial Differential
  Equations \textbf{11} (1986), no.~5, 545--565. \MR{829596 (87i:35026)}

\bibitem[Wei89]{MR1034501}
\bysame, \emph{The nonlinear {S}chr\"odinger equation---singularity formation,
  stability and dispersion}, The connection between infinite-dimensional and
  finite-dimensional dynamical systems ({B}oulder, {CO}, 1987), Contemp. Math.,
  vol.~99, Amer. Math. Soc., Providence, RI, 1989, pp.~213--232. \MR{1034501
  (90m:35188)}

\bibitem[Wei83]{MR691044}
\bysame, \emph{Nonlinear {S}chr\"odinger equations and sharp interpolation
  estimates}, Comm. Math. Phys. \textbf{87} (1982/83), no.~4, 567--576.
  \MR{691044 (84d:35140)}

\end{thebibliography}
\end{document}